\documentclass[12pt,a4paper]{amsart}         
\usepackage[margin=2cm]{geometry}
\usepackage[backend=bibtex,url=false,doi=true,isbn=false,giveninits=true,style=numeric,sorting=nyt]{biblatex}
\addbibresource{apn_sidon.bib}
\usepackage{amsmath,amsfonts,amssymb,amsthm}
\usepackage{mathrsfs,epsfig,epstopdf,url}
\usepackage{enumerate}
\usepackage[table]{xcolor}
\usepackage{multirow}

\theoremstyle{definition}
\newtheorem{definition}{Definition}%

\theoremstyle{plain}
\newtheorem{theorem}[definition]{Theorem}
\newtheorem{lemma}[definition]{Lemma}
\newtheorem{proposition}[definition]{Proposition}
\newtheorem*{problem}{Problem}

\newtheorem{remark}[definition]{Remark}

\newcommand{\bm}[1]{\boldsymbol{#1}}

\DeclareMathOperator{\Aff}{Aff}
\newcommand{\NL}{\mathrm{NL}_{\bm{v}}}

\title{Thin Sidon sets and the nonlinearity of vectorial Boolean functions}

\author{G\'abor P. Nagy}
\address{Department of Algebra \\
	Budapest University of Technology and Economics\\
	M\H{u}egyetem rkp 3\\
	H-1111 Budapest, Hungary}
\address{Bolyai Institute \\
	University of Szeged \\
	Aradi v\'ertan\'uk tere 1\\
	H-6720 Szeged, Hungary}
\email{nagyg@math.u-szeged.hu}

\thanks{Support provided from the National Research, Development and Innovation Fund of Hungary, NKFIH-OTKA Grant SNN 132625, and the Program of Excellence TKP2021-NVA-02 ate the Budapest University of Techonology and Economics.}

\keywords{Sidon sets, vectorial Boolean functions, APN functions, Hamming distance, error correcting codes}
\subjclass[2010]{94A60,94D10,05B25}

\begin{document}

\begin{abstract}
The vectorial nonlinearity of a vector valued function is its distance from the set of affine functions. In 2017, Liu, Mesnager and Chen conjectured a general upper bound for the vectorial linearity. Recently, Carlet proved a lower bound in terms of the differential uniformity. In this paper, we improve Carlet's lower bound. Our method is elementary, it relies on the fact that the level sets of an APN functions are Sidon sets. We give a survey on Sidon sets in elementary abelian 2-groups. We study the completeness problem of Sidon sets obtained from hyperbolas and ellipses of the finite affine plane. 
\end{abstract}

\maketitle

\section{Introduction}

Vectorial Boolean functions $\mathbb{F}_2^n\to \mathbb{F}_2^m$, also called \textit{substitution boxes,} play a central role in symmetric key block ciphers. By being the only nonlinear components of the ciphers, they provide \textit{confusion.} The study of the nonlinear properties of vectorial Boolean functions is fundamental for the evaluation of the resistance of the block cipher against the main attacks, such as the differential attack and the linear attack, see \cite{Carlet2021,Carlet2021b} and their references. The main metrics of these nonlinear properties are the \textit{differential uniformity} (the lower is the less linear), the \textit{nonlinearity,} and the \textit{vectorial nonlinearity.} 

Let $V,V'$ be vector spaces over $\mathbb{F}_2$. Let $f,g:V\to V'$ be functions.
\begin{enumerate}[(i)]
\item The \textit{Hamming distance} of $f,g$ is
\[d_H(f,g) = |\{x\in V \mid f(x)\neq g(x)\}|.\]
\item $f$ is \textit{affine,} if $f(x+y+z)=f(x)+f(y)+f(z)$ for all $x,y,z\in V$. The set of affine $V\to V'$ maps is denoted by $\Aff(V,V')$.  
\item Let $\omega:V'\to \mathbb{F}_2$ be a nonzero linear functional. The Boolean function $\omega f:V\to \mathbb{F}_2$, $(\omega f)(x)=\omega(f(x))$ is called a \textit{component Boolean function} of $f$. The \textit{nonlinearity} of $f$ is the distance between its component Boolean functions and affine Boolean functions
\[\mathrm{NL}_1(f)=\min_{\substack{\omega\in (V')^*\setminus\{0\} \\ \alpha\in\Aff(V,V')}} d_H(\omega f,\alpha).\]
\item The \textit{vectorial nonlinearity} of $f$ is its distance from the set of affine functions
\[\NL(f) = d_H(f,\Aff(V,V'))= \min_{\alpha\in\Aff(V,V')} d_H(f,\alpha).\]
\item The \textit{differential uniformity} of $f$ is 
\[\delta_f = \max_{\substack{a\in V\setminus\{0\} \\ b\in V'}} |\{x\in V \mid f(x)+f(x+a)=b\}|.\]
\item If $V=V'$ and $\delta_f=2$, then the function $f$ is called \textit{almost perfect nonlinear (APN).} 
\end{enumerate}
An affine map has maximum differential uniformity $\delta_f=|V|$, functions with low differential uniformity are considered to be far from being linear. One has $\delta_f\geq 2$, and by definition, APN functions are those with least possible differential uniformity. The \textit{Walsh-Hadamard transform} provides an effective tool for computation with the vectorial nonlinearity $\mathrm{NL}_1(f)$. However, the computation of the nonlinearity $\NL(f)$ is in general hard. Also, it is difficult to give nontrivial lower and upper bounds for $\NL(f)$. The two nonlinearity measures are linked by the inequality
\[\NL(f) \geq \mathrm{NL}_1(f),\]
that holds for all $f:V\to V'$, see \cite{Liu2017}.

\begin{problem}[Liu-Mesnager-Chen Conjecture \cite{Liu2017}]
If $n=\dim(V)$, $m=\dim(V')$, then for any map $f:V\to V'$, 
\begin{align} \label{eq:LMCconj}
\NL(f)\leq (1-2^{-m})(2^n-2^{n/2})
\end{align}
holds. In particular, if $n=m$, then
\[\NL(f)\leq 2^n-2^{n/2}-1.\]
\end{problem}
One expects that APN functions have high vectorial nonlinearity, near to the upper bound given in \eqref{eq:LMCconj}. Recently, Carlet \cite[Proposition 4]{Carlet2021b} proved a general lower bound for the vectorial nonlinearity, in terms of the differential uniformity:
\begin{align} \label{eq:carlet_df}
\NL(f)\geq 2^n - \sqrt{2^n+\delta_f(2^n-1)}.
\end{align}
For APN functions, this gives
\begin{align} \label{eq:carlet_apn}
\NL(f)> 2^n - \sqrt{3} \cdot 2^{n/2}.
\end{align}
In this paper, we improve Carlet's bound \eqref{eq:carlet_df}. We remark that for the bound \eqref{eq:carlet_apn} on APN functions, the same improvement has been obtained independently by Carlet \cite{Carlet2022b}, as well.

\begin{theorem} \label{thm:nonlinearity}
For all $f:V\to V'$, we have
\[\NL(f)\geq 2^n - \sqrt{\delta_f}\cdot 2^{n/2}-\frac{1}{2}.\]
In particular, for an APN function $f:\mathbb{F}_2^n\to \mathbb{F}_2^n$, 
\[\NL(f)\geq 2^n - \sqrt{2} \cdot 2^{n/2}-\frac{1}{2}.\]
\end{theorem}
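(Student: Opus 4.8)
The plan is to bound, for an arbitrary affine map $\alpha\in\Aff(V,V')$, the size of the agreement set $S_\alpha=\{x\in V\mid f(x)=\alpha(x)\}$, since by definition $d_H(f,\alpha)=2^n-|S_\alpha|$ and hence $\NL(f)=2^n-\max_\alpha|S_\alpha|$, where $n=\dim V$ and $|V|=2^n$. Fix $\alpha$ and put $g=f+\alpha$. The first step is the observation that adding an affine map does not change the differential uniformity: for $a\neq 0$ the derivative $g(x)+g(x+a)$ differs from $f(x)+f(x+a)$ only by $\alpha(x)+\alpha(x+a)$, which depends on $a$ alone, so $\delta_g=\delta_f$. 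Moreover $S_\alpha=g^{-1}(0)$ is now the zero level set of $g$. Thus it suffices to show that every level set of a function with differential uniformity $\delta$ has at most $\sqrt{\delta}\,2^{n/2}+\tfrac12$ elements.

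The core step is a double count on $S=g^{-1}(0)$, with $N=|S|$. For each nonzero $v\in V$ let $r(v)=|\{x\in S\mid x+v\in S\}|$ count the representations of $v$ as a sum of two elements of $S$. If $x$ and $x+v$ both lie in $S$, then $g(x)+g(x+v)=0$, so $r(v)\leq|\{x\mid g(x)+g(x+v)=0\}|\leq\delta_f$. Summing over the $2^n-1$ nonzero directions gives $\sum_{v\neq 0}r(v)=N(N-1)$, the number of ordered pairs of distinct elements of $S$, whence $N(N-1)\leq(2^n-1)\,\delta_f$. This is exactly where the Sidon property enters: when $f$ is APN we have $\delta_f=2$, so $r(v)\leq 2$ for every $v$, i.e.\ no nonzero element has two distinct unordered representations as a sum from $S$, which is precisely the defining condition of a Sidon set in $V$.

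It then remains to convert $N(N-1)\leq(2^n-1)\delta_f$ into the stated bound, and this is where the only real subtlety lies. A careless estimate $N(N-1)<2^n\delta_f$ yields merely $N<\tfrac12+\sqrt{2^n\delta_f+\tfrac14}$, which is slightly weaker than required; the point is to keep the factor $2^n-1$ rather than $2^n$. Writing $N(N-1)=(N-\tfrac12)^2-\tfrac14$, the inequality becomes $(N-\tfrac12)^2\leq 2^n\delta_f-\delta_f+\tfrac14$, and since $\delta_f\geq 2>\tfrac14$ the right-hand side is strictly less than $2^n\delta_f$. Hence $N-\tfrac12<\sqrt{\delta_f}\,2^{n/2}$, so $|S_\alpha|<\sqrt{\delta_f}\,2^{n/2}+\tfrac12$ for every $\alpha$. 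Taking the maximum over $\alpha$ gives $\NL(f)=2^n-\max_\alpha|S_\alpha|>2^n-\sqrt{\delta_f}\,2^{n/2}-\tfrac12$, and specializing to $\delta_f=2$ yields the APN case. The main obstacle is thus not conceptual but the sharp handling of the additive constant $\tfrac12$: everything else is elementary counting once the reduction to level sets and the identity $\delta_g=\delta_f$ are in place.
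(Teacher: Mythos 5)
Your proposal is correct and follows essentially the same route as the paper: reduce to bounding the zero level set of $g=f+\alpha$, note that adding an affine map preserves the differential uniformity, and bound the level set by the count $N(N-1)\leq(2^n-1)\delta_f$ with careful handling of the constant $\tfrac12$. The only cosmetic difference is that the paper packages your double count as two lemmas (level sets of $f$ are $\delta_f$-thin Sidon sets, and the size bound $|T|\leq\sqrt{t|A|}+\tfrac12$ for $t$-thin Sidon sets), whereas you inline the same argument directly.
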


Our key observation is the connection between Sidon sets and the differential uniformity. Sidon sets are studied since the 1930's, see \cite{Babai1985,Balogh2021,Erdoes1941,Lindstroem1969,Redman2021} for the combinatorial setting, and \cite{Carlet2022,Carlet2021a} for their importance in cryptography. A set $S$ of integers is a \textit{Sidon set} or a \textit{Sidon sequence,} if $a+b=c+d$, $a,b,c,d\in S$ imply $\{a,b\}=\{c,d\}$. The definition can be generalized to any abelian group $(A,+)$, where special attention has to be paid for involutions, see \cite{Babai1985}. We say that $S\subseteq A$ is a \textit{Sidon set} in $A$, if for any $x,y,z,w\in S$ of which at least three are different,
\[x+y\neq z+w.\]
The Sidon set $S$ is \textit{complete,} if for any $a\in A\setminus S$, $S\cup \{a\}$ is not a Sidon set. In \cite{Carlet2022a,Redman2021} (using the terminology \textit{maximal} instead of complete), the completeness of those Sidon sets has been investigated, that can be obtained as graphs of APN functions. Carlet \cite{Carlet2022a} observed that the incompleteness of the graph of the APN function $f$ is equivalent with the existence of another APN function $g$ such that their Hamming distance is $d_H(f,g)=1$. In 2018, Budaghyan, Carlet, Helleseth, Li and Sun conjectured that $d_H(f,g)=1$ is impossible for two APN functions $f,g$, see \cite[Conjecture 2]{Budaghyan2018}. 

For any Sidon set $S$ of $A$, the upper bound $|S|< \sqrt{2|A|}+\frac{1}{2}$ is immediate. If $A$ has odd order, then we even have $|S|<\sqrt{|A|}+\frac{1}{2}$. If $A$ is cyclic, then a result of Erd\H{o}s and Tur\'an \cite{Erdoes1941} implies $|S|\leq (1+o(1))\sqrt{|A|}$. For many classes of abelian groups, including cyclic groups and groups of odd prime exponent, Sidon sets of size $(1+o(1))\sqrt{|A|}$ are known to exist. Hence, in these groups, the upper and lower bounds for $|S|$ are very close. However, for elementary abelian $2$-groups, up to the author's knowledge, there is no upper bound which is significantly better than $\sqrt{2|A|}$. At the same time, the best known constructions satisfy $|S|\leq \sqrt{|A|}+O(1)$. To be more precise, fix $q=2^m$ and let $A$ be the additive group of $\mathbb{F}_q^2$. The classical example of a Sidon set in $A$ is due to Lindström \cite{Lindstroem1969}:
\begin{align} \label{eq:cubeexample}
\{(x,x^3) \mid x\in \mathbb{F}_q\}.
\end{align}
This has size $q=\sqrt{|A|}$. It is much harder to find Sidon sets of order $q+1$. Using the connection between double-error correcting linear codes and Sidon sets, examples of size $q+1$ were obtained from non-binary BCH codes and binary Goppa codes. Recently, Carlet and Mesnager \cite{Carlet2022} showed that cyclic subgroups of order $q+1$ are Sidon sets in $\mathbb{F}_{q^2}$. Again, up to the author's knowledge, only sporadic examples of Sidon sets are known, where $|S|\geq q+2$. In this paper we present an infinite class of examples with $|S|=q+2$, and show their completeness. 

We construct these sets as point sets of the affine plane over the finite field $\mathbb{F}_q$. The points of $AG(2,q)$ are elements of $\mathbb{F}_q^2$, lines are given by homogeneous linear equations $aX+bY+c=0$, and \textit{conics} are given by irreducible polynomials of degree $2$ in two variables. In characteristic $2$, the tangents of a conic pass through a common point $N$, which is called the \textit{nucleus} of the conic. 

\begin{theorem} \label{thm:conics}
Let $m\geq 4$ be an integer, $q=2^m$. Let $C$ be an ellipse or a hyperbola in the affine plane $AG(2,q)$. Let $N$ be the nucleus of $C$. 
\begin{enumerate}[(i)]
\item When $m$ is even and $C$ is a hyperbola, or when $m$ is odd and $C$ is an ellipse, then $C$ is a complete Sidon set in $\mathbb{F}_q^2$. 
\item When $m$ is odd and $C$ is a hyperbola, or when $m$ is even and $C$ is an ellipse, then $C\cup \{N\}$ is a complete Sidon set in $\mathbb{F}_q^2$. 
\end{enumerate}
\end{theorem}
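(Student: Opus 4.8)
The plan is to reduce the geometry to an arithmetic splitting problem. Up to an affine transformation (which preserves both the Sidon property and completeness, since $(Ax+b)+(Ay+b)=A(x+y)$ and $\sum_{i=1}^3(Ax_i+b)=A\sum x_i+b$ in characteristic $2$) every hyperbola, resp. ellipse, may be replaced by a standard model in which the nucleus is the origin: the split model $C=\{(t,t^{-1})\mid t\in\mathbb{F}_q^\ast\}$ for a hyperbola, and the nonsplit model $C=\{\mu\in\mathbb{F}_{q^2}\mid \mu^{q+1}=1\}$ (identifying $\mathbb{F}_{q^2}$ with $\mathbb{F}_q^2$) for an ellipse. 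In both cases $N=0$, and $C$ is a single orbit of the one-dimensional torus $\mathbb{T}$ of order $q-1$, resp. $q+1$, that fixes $N$. I will use two elementary reformulations in the $2$-group: $S$ is a Sidon set exactly when no four distinct points satisfy $x+y=z+w$, and (a one-line consequence of the definition) $S$ is complete exactly when every $a\notin S$ can be written as $a=s_1+s_2+s_3$ with $s_1,s_2,s_3\in S$ pairwise distinct.

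First I would prove that $C$ is a Sidon set, independently of the parity of $m$. For the hyperbola, $x+y=z+w$ with $x=(a,a^{-1})$, etc., gives $a+b=c+d$ together with $\tfrac{a+b}{ab}=\tfrac{c+d}{cd}$; if the common value $a+b=c+d$ is nonzero this forces $ab=cd$, hence $\{a,b\}=\{c,d\}$, and if it is zero the relation is trivial. For the ellipse the same argument works after applying Frobenius $\mu\mapsto\mu^q=\mu^{-1}$ to $\mu_1+\mu_2=\mu_3+\mu_4$, which yields $\mu_1\mu_2=\mu_3\mu_4$ in the nondegenerate case. Thus $C$ is Sidon in either model.

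Next comes the dichotomy. Whether $C\cup\{N\}$ remains Sidon is decided by whether $N=0$ is a sum of three distinct points of $C$. Writing the three parameters as the roots of $T^3-e_1T^2+e_2T-e_3$, the vanishing of the two coordinates of the sum forces $e_1=e_2=0$ (the second relation coming from the reciprocal coordinate for the hyperbola and from Frobenius for the ellipse), so the parameters must be $\{\beta,\beta\omega,\beta\omega^2\}$ for a primitive cube root of unity $\omega$. Such $\omega$ lies in $\mathbb{T}$ precisely when $3\mid q-1$ (hyperbola, i.e. $m$ even) or $3\mid q+1$ (ellipse, i.e. $m$ odd). Hence in the cases of part (i) the nucleus is already a threefold sum, so $C\cup\{N\}$ fails to be Sidon, whereas in the cases of part (ii) it does not; this matches the statement exactly, and in part (ii) it also makes $N=0$ available as a summand, so that every chord sum $s+t$ becomes an admissible threefold sum.

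The main obstacle is completeness: showing that every point outside $C$ (resp. outside $C\cup\{N\}$) is a sum of three distinct points of the set. Using $\mathbb{T}$, which fixes $N$ and transports such sums among themselves, I would reduce to one representative per torus orbit on the complement, i.e. to a single arithmetic parameter (for the hyperbola the product of the coordinates, for the ellipse the norm). For a fixed target the three parameters must be the roots of $T^3+uT^2+(v\lambda)T+\lambda$, in which $\lambda=e_3$ remains free; the task becomes to choose $\lambda$ so that this cubic has three distinct nonzero roots in $\mathbb{F}_q$ (for the ellipse one instead lets $\lambda$ range over the norm-one group and asks for splitting relative to $\mathbb{F}_{q^2}/\mathbb{F}_q$, the ``unitary twist'' of the same problem, which is exactly where the parity of $m$ reverses the roles of the two conics). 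After depressing the cubic by $T\mapsto T+u$, complete splitting is governed by the usual trace/Artin--Schreier criterion together with nonvanishing of the discriminant for distinctness. I expect the genuine difficulty to lie precisely here: proving that as $\lambda$ varies the splitting condition is satisfied for \emph{some} $\lambda$ at every admissible target, while the only targets for which it can \emph{never} be satisfied are the points of $C$ itself (and $N$ in the cases of part (ii)), so that the Sidon set is not merely large but genuinely complete. The hypothesis $m\geq 4$ should enter exactly to give enough values of $\lambda$ for a character-sum estimate to force a completely split member, and to exclude the sporadic coincidences that occur for small $q$.
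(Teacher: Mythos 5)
Your reduction to the standard models, the torus-orbit structure, and your elementary-symmetric-function arguments are all sound: the proof that $C$ is Sidon in both models, and the cube-root-of-unity criterion deciding whether $N$ is a sum of three distinct points of $C$ (hence whether $N$ can be adjoined), are correct, and they give a cleaner, more algebraic alternative to the paper's geometric arguments in Proposition \ref{pr:conicsidon} (which uses intersection of conics at infinity for the Sidon property, and an order-$3$ affine collineation fixing the nucleus for the dichotomy). Your reformulation of completeness in an elementary abelian $2$-group --- $S$ is complete iff every $a\notin S$ is a sum of three pairwise distinct elements of $S$ --- is also correct, and your overall logical skeleton matches the paper's: Sidon property, nucleus dichotomy, then ``every other point is a threefold sum.''

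The genuine gap is the last step, which is the actual content of the theorem, and you explicitly leave it unproved: the claim that for every admissible target some $\lambda$ makes the cubic $T^3+uT^2+v\lambda T+\lambda$ split completely over $\mathbb{F}_q$ with three distinct nonzero roots (respectively, with all roots in the norm-one subgroup of $\mathbb{F}_{q^2}$ for the ellipse) is asserted to follow from ``a character-sum estimate,'' but no such estimate is established. This is not a routine verification: any Chebotarev/character-sum count of completely split specializations requires proving that the relevant cover (equivalently, an auxiliary curve) is absolutely irreducible with controlled genus, and that is exactly where the paper's work lies. Lemma \ref{lm:hyperbolacomplete} produces the explicit cubic curve $x^2y+xy^2+cxy+x^2+y^2+x+y=0$, checks it is nonsingular of genus $1$, and applies Hasse--Weil with $q\geq 16$; Lemma \ref{lm:ellipsecomplete} does the same for the quartic $(X^2+\gamma XZ+Z^2)(Y^2+\gamma YZ+Z^2)+c(X+Y)Z^3$, where the absolute irreducibility proof is a nontrivial case analysis (a putative conic--conic or line--cubic factorization must be excluded by hand). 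Without such an input your estimate can simply fail: a reducible family can have no completely split members at all. Two further points your sketch glosses over: for the ellipse, ``roots in the norm-one group'' is not a splitting condition over $\mathbb{F}_q$, and converting it into an $\mathbb{F}_q$-rational counting problem is additional work (the paper does this via the rational parametrization of $E$, arriving at equation \eqref{eq:normeq}); and for the hyperbola the degenerate torus orbits (the punctured asymptotes, where your parameter ``product of coordinates'' vanishes) need a separate explicit argument, as given in the proof of Proposition \ref{pr:completeness}. So your plan is compatible with the paper's proof and could likely be completed along the same lines, but as written the central existence statement is a conjecture, not a proof.
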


As an ellipse of $AG(2,q)$ has $q+1$ points, we obtain Sidon sets of size $\sqrt{|A|}+2$ for $m$ even. We will also show that the cyclic subgroup and the binary Goppa code constructions are isomorphic to an ellipse in $AG(2,q)$, see Remark \ref{rem:ellipse}. 

\begin{problem}[Sidon sets in elementary abelian $2$-groups]
For which integers $n$ is true that if $A$ is an elementary abelian group of order $2^n$, and $S$ is a Sidon set in $A$, then $|S|\leq \sqrt{|A|}+2$? 
\end{problem}

We are only aware of a single value of $n$ which does not satisfy this property: $n=11$. By extending binary Goppa codes, Chen \cite{Chen1991} constructed a binary linear code with parameters $[47,36,5]$. This gives rise to a Sidon set $S$ of size $48$ in $A=\mathbb{F}_2^{11}$. Hence, $|S|>\sqrt{|A|}+2\approx 47.25$. 

The paper is structured as follows. In section \ref{sec:sidon-prop}, we enumerate the most important properties of Sidon sets in abelian groups in general, and in elementary abelian 2-group in particular. In section \ref{sec:sidon-codes}, we present the relation between Sidon sets of $\mathbb{F}_2^n$, and binary linear double-error correcting codes. The known constructions of such codes enable us to construct Sidon sets of size $2^{n/2}+O(1)$ for $n$ even, and to determine the maximum size of a Sidon set for $n\leq 10$. In section \ref{sec:nonlin}, we state the relation between Sidon sets and the level sets of APN functions. We use this link to give an elementary proof for Theorem \ref{thm:nonlinearity}. In sections \ref{sec:conics} and \ref{sec:completeness}, we construct Sidon sets using ellipses and hyperbolas of the affine plane $AG(2,q)$, and we prove their completeness. We compile these results to prove Theorem \ref{thm:conics} in section \ref{sec:proof}.

\section{Properties of Sidon sets} \label{sec:sidon-prop}

We start by repeating the definition of a Sidon set, and add the definition of a closely related concept.

\begin{definition}
Let $A$ be a finite abelian group.
\begin{enumerate}[(i)]
\item We say that $S\subseteq A$ is a \textit{Sidon set} in $A$, if for any $x,y,z,w\in S$ of which at least three are different,
\[x+y\neq z+w.\]
Equivalently, $x-z\neq w-y$. 
\item Let $t$ be a positive integer. The subset $T$ of $A$ is \textit{$t$-thin Sidon,} if for any $a\in A$, $|T\cap (T+a)|\leq t$. 
\end{enumerate}
\end{definition}

The following lemmas are well-known, the proofs are very easy. In order to be self-contained, we give a short hint. 

\begin{lemma}
Let $T$ be a $t$-thin Sidon set in the abelian group $A$.
\begin{enumerate}[(i)]
\item For any $a\in A\setminus\{0\}$, the equation $x-y=a$ has at most $t$ solutions with $x,y\in T$. 
\item $|T|\leq \sqrt{t|A|}+\frac{1}{2}$.
\end{enumerate}
\end{lemma}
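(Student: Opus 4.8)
The plan is to handle the two parts separately: I would deduce (i) directly from the thin-Sidon hypothesis, and then feed (i) into a double-counting argument to obtain the size bound in (ii).

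For part (i), I would rewrite the difference equation $x-y=a$ as $x=y+a$ and read off the constraint on $y$. Requiring both $x\in T$ and $y\in T$ forces $y$ to lie simultaneously in $T$ and in $T-a$, so the number of solutions is exactly $|T\cap(T-a)|$. Since $T-a=T+(-a)$, the $t$-thin property applied to the group element $-a$ gives $|T\cap(T-a)|\le t$, which is precisely the assertion. Note that no case distinction concerning involutions is needed here, because we only estimate the size of this fiber rather than matching unordered pairs.

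For part (ii), I would double count the ordered pairs $(x,y)$ of \emph{distinct} elements of $T$. There are $|T|(|T|-1)$ such pairs, and each determines a nonzero difference $a=x-y\in A\setminus\{0\}$. By part (i), every fixed nonzero $a$ is the difference of at most $t$ such pairs, and there are $|A|-1$ nonzero group elements, whence $|T|(|T|-1)\le t(|A|-1)$. Completing the square rewrites this as $(|T|-\tfrac12)^2\le t(|A|-1)+\tfrac14$, and since $t\ge 1$ the right-hand side equals $t|A|-t+\tfrac14\le t|A|-\tfrac34<t|A|$; taking square roots yields $|T|<\sqrt{t|A|}+\tfrac12$, which is in fact slightly stronger than the stated bound.

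I expect essentially no genuine obstacle here, as both parts reduce to elementary counting. The only points requiring a moment of care are the bookkeeping of $T-a$ versus $T+a$ in part (i), and the completion of the square in part (ii), where one checks that the slack $+\tfrac14$ is comfortably absorbed by the $-t$ coming from counting over $|A|-1$ rather than $|A|$ nonzero differences. This also shows that the additive constant $\tfrac12$ is the right one and that the bound is never attained.
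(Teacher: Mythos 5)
Your proof is correct and matches the paper's argument: part (i) is the same fiber identification (the paper writes the solutions as $T\cap(T+a)$, you as $T\cap(T-a)$, a purely cosmetic difference), and part (ii) is exactly the paper's double count $|T|(|T|-1)\le t(|A|-1)$ followed by completing the square. Your explicit verification that the slack $+\tfrac14$ is absorbed, yielding the strict inequality $|T|<\sqrt{t|A|}+\tfrac12$, is a nice touch but does not change the substance.
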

\begin{proof}
(i) $x=y+a$ if and only if $x\in T\cap(T+a)$. (ii) $|T|(|T|-1)\leq t(|A|-1)$. 
\end{proof}

The upper bound for the size of a $t$-thin Sidon set is almost sharp. Caicedo, Martos and Trujillo \cite{Caicedo2015} constructed $t$-thin Sidon sets of size $\geq \sqrt{t|A|+1}$, where $A$ is the cyclic group of order $(q^2-1)/t$, $q$ is a prime power, and $t$ a divisor of $q-1$. 

\begin{lemma} \label{lm:tSidon}
\begin{enumerate}[(i)]
\item Sidon sets are $2$-thin Sidon sets in general. 
\item If $A$ has odd order, then $S\subseteq A$ is Sidon if and only if it is a $1$-thin Sidon set.
\item If $A$ is an elementary abelian $2$-group, then $S\subseteq A$ is Sidon if and only if it is a $2$-thin Sidon set. 
\end{enumerate}
\end{lemma}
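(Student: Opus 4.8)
The plan is to prove the three-part characterization
of Sidon sets versus $t$-thin Sidon sets by translating each condition
into an equation of the form $x-z = w-y$ and counting how many
ordered quadruples $(x,y,z,w)$ of elements of $S$ can satisfy a fixed
difference equation, being careful about the role of involutions. The
general principle is that ``$S$ is Sidon'' forbids $x+y=z+w$ whenever at
least three of $x,y,z,w$ are distinct, whereas ``$S$ is $t$-thin''
bounds $|S\cap(S+a)|\le t$, which by part (i) of the previous lemma
counts solutions of $x-y=a$; so the whole exercise is to match these two
bookkeeping schemes.

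For part (i), I would argue that every Sidon set is automatically
$2$-thin. Fix $a\neq 0$ and suppose $x-y=a$ and $x'-y'=a$ are two
solutions with $\{x,y\}\neq\{x',y'\}$; rewriting gives $x+y'=x'+y$, and
since $a\neq 0$ forces $x\neq y$ and $x'\neq y'$, at least three of
$x,y',x',y$ are distinct unless the two solutions coincide, giving the
usual Sidon contradiction. Hence at most\ldots the subtlety is that in a
group with involutions one can have exactly two solutions to $x-y=a$
arising from $x-y=a$ and $y'-x'=a$ with the pair swapped, which is
precisely why the bound is $2$ and not $1$. For part (ii), when $|A|$ is
odd there are no involutions, so the swap phenomenon cannot produce a
genuinely different unordered solution; I would show that a second
solution to $x-y=a$ always forces $\{x,y\}=\{x',y'\}$ as \emph{ordered}
pairs (since $x=y'$, $y=x'$ would give $2x=x+y'=x'+y=2y$, hence $x=y$ in
odd order, contradicting $a\neq 0$), so Sidon is equivalent to $1$-thin.

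For part (iii), the elementary abelian $2$-group case, I would use that
every nonzero element is an involution, so $x-y=x+y$ and the equation
$x-z=w-y$ becomes $x+z=w+y$ throughout. The forward direction (Sidon
$\Rightarrow$ $2$-thin) is immediate from part (i). For the converse I
would assume $S$ is $2$-thin and show it is Sidon: take $x+y=z+w$ with at
least three of the four distinct, rewrite as $x+z=y+w=:a$, and note
$a\neq 0$ (if $a=0$ then $x=z$ and $y=w$, only two distinct values).
Then $(x,z)$ and $(w,y)$ are solutions of $u+v=a$, i.e.\ $u\in
S\cap(S+a)$; by $2$-thinness there are at most two such $u$, and I would
argue that the only way to have three distinct values among $x,y,z,w$ is
to exceed this count, yielding the contradiction. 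The step I expect to
require the most care is the combinatorial matching between the
\emph{unordered-pair} formulation in the Sidon definition and the
\emph{element-count} formulation in $t$-thinness, specifically tracking
which coincidences among $x,y,z,w$ are allowed so that ``at least three
distinct'' is handled correctly in characteristic $2$, where the
involution identity $x-y=x+y$ collapses several cases that are genuinely
distinct in odd order.
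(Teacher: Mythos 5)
Your overall strategy is the paper's own: translate both conditions into counts of solutions of a fixed difference equation $x-y=a$ and track involutions as the only source of multiplicity. Parts (i) and (iii) are correct in substance. In (i), your intermediate claim that ``at least three of $x,y',x',y$ are distinct unless the two solutions coincide'' is literally false (swapped solutions also give only two distinct values), but you correct this yourself in the next clause: two genuinely distinct unordered solutions force the swap $x=y'$, $y=x'$, which requires $2a=0$, and this is exactly why the bound is $2$. Your converse in (iii) --- for $a=x+z=y+w\neq 0$ all four of $x,y,z,w$ lie in $S\cap(S+a)$, so three distinct values among them would violate $2$-thinness --- is precisely the paper's argument.

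The genuine gap is in part (ii), which is an equivalence: you prove only the forward direction, Sidon $\Rightarrow$ $1$-thin (the $2x=2y$ argument, which is correct), and then assert ``so Sidon is equivalent to $1$-thin.'' The converse, that a $1$-thin set in an odd-order group is Sidon, is never argued, and it does not follow formally from anything you wrote: a thinness bound does not automatically yield the Sidon property (for instance $\{0,1,2\}$ in a large cyclic group is $2$-thin but not Sidon, since $0+2=1+1$), so the direction from thinness to Sidon always needs its own, even if short, argument. The paper supplies it: suppose $x+y=z+w$ with $x,y,z,w\in S$, and set $a=x-z=w-y$; if $a=0$ then $x=z$ and $y=w$, while if $a\neq 0$ then $x,w\in S\cap(S+a)$, so $1$-thinness forces $x=w$ and hence $y=z$; in either case at most two of the four elements are distinct, so no forbidden quadruple exists. (Note this direction works in any abelian group; oddness is only needed for Sidon $\Rightarrow$ $1$-thin.) The repair is two lines in your own framework --- it is your part (iii) argument with the count lowered to $1$ --- but as written the ``if and only if'' in (ii) is not established.
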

\begin{proof}
Assume that $S$ is Sidon, $a\in A\setminus\{0\}$, and $x,y\in S\cap (S+a)$ with $x\neq y$. Then 
\[a=x-x'=y-y'\]
with $x',y'\in S$. As $a\neq 0$, $x\neq x'$ and $y\neq y'$. It follows $x=y'$ and $y=x'$. In particular, $x$ and $a$ determine $y$, hence (i) holds. If $A$ has odd order, then $x=y'$ and $y=x'$ implies $2x=2y$, which contradicts $x\neq y$ in a group of odd order. Conversely, if $|A|$ is odd and $S$ is $1$-thin, then $x-x'=y-y'$ implies $x=y$ and $x'=y'$, or $x=x'$ and $y=y'$, by Lemma \ref{lm:tSidon}(i). Assume that $A$ has exponent $2$, $S$ is $2$-thin, and $x+y=z+w$ with $x,y,z,w\in S$. If $x=y$, then $z=w$. If $x\neq y$, then $\{x,y\}=\{z,w\}$ by Lemma \ref{lm:tSidon}(i).  
\end{proof}

In \eqref{eq:cubeexample}, we have seen the classical example of a Sidon set in the elementary abelian $2$-group $\mathbb{F}_q^2$, $q=2^m$. The functions $x^3$ can be replaced by any APN function $f:\mathbb{F}_q\to \mathbb{F}_q$. More precisely, we have the following \textit{folklore} result, see \cite{Carlet2022}. 
\begin{lemma}
The function $f:\mathbb{F}_q\to \mathbb{F}_q$ is APN if and only it its graph $\{(x,f(x)) \mid x\in \mathbb{F}_q\}$ is a Sidon set in $\mathbb{F}_q^2$. 
\end{lemma}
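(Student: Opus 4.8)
The plan is to reduce the statement to the characterization of Sidon sets in elementary abelian $2$-groups already established in Lemma~\ref{lm:tSidon}(iii). Since $q=2^m$, the group $A=\mathbb{F}_q^2$ has exponent $2$; writing $G=\{(x,f(x))\mid x\in\mathbb{F}_q\}$ for the graph, it therefore suffices to prove that $f$ is APN if and only if $G$ is $2$-thin Sidon, that is, $|G\cap(G+c)|\leq 2$ for every nonzero $c\in A$.

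The key step is to rewrite $|G\cap(G+c)|$ as a solution count of a differential equation. Write $c=(c_1,c_2)$. A point $(x,f(x))\in G$ lies in $G+c$ exactly when $(x,f(x))+c=(x+c_1,f(x)+c_2)$ belongs to $G$, i.e. when $f(x+c_1)=f(x)+c_2$; adding $f(x)$ to both sides and using that $A$ has characteristic $2$, this is equivalent to $f(x)+f(x+c_1)=c_2$. Hence
\[
|G\cap(G+c)| = \bigl|\{x\in\mathbb{F}_q \mid f(x)+f(x+c_1)=c_2\}\bigr|.
\]
I would then distinguish two cases. If $c_1=0$, then $c_2\neq 0$ and the equation reads $0=c_2$, so the count is $0$, trivially at most $2$. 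If $c_1\neq 0$, the count is exactly one of the cardinalities appearing in the definition of $\delta_f$, taken with $a=c_1\neq 0$ and $b=c_2$.

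From this the desired equivalence follows directly: the $2$-thin condition over all nonzero $c$ holds if and only if every equation $f(x)+f(x+a)=b$ with $a\neq 0$ has at most two solutions, which is precisely $\delta_f\leq 2$. The one point requiring care is that $\delta_f\geq 2$ always holds: for $a\neq 0$ the solution set of $f(x)+f(x+a)=b$ is invariant under the fixed-point-free involution $x\mapsto x+a$, hence has even size, so some value $b$ is attained at least twice. Therefore $\delta_f\leq 2$ is equivalent to $\delta_f=2$, i.e. to $f$ being APN, and chaining the equivalences $G$ Sidon $\iff$ $G$ $2$-thin Sidon $\iff$ $\delta_f=2$ completes the proof. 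I expect no genuine obstacle; the only subtlety is the bookkeeping identifying points of the graph inside a translate with solutions of the differential equation, together with the parity observation that renders $\delta_f\geq 2$ automatic.
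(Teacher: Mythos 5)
Your proof is correct and complete. Note that the paper itself offers no proof of this lemma --- it is stated as a \emph{folklore} result with a citation to Carlet--Mesnager --- so there is no in-paper argument to compare against; what you have written fills that gap using exactly the paper's own toolkit. Your reduction via Lemma~\ref{lm:tSidon}(iii) (Sidon $\iff$ $2$-thin in elementary abelian $2$-groups) followed by the identification
\[
|G\cap(G+c)| = \bigl|\{x\in\mathbb{F}_q \mid f(x)+f(x+c_1)=c_2\}\bigr|
\]
is precisely the graph analogue of the paper's Lemma~\ref{lm:deltafthin}, which does the same translate-intersection count for level sets of $f$; in effect you have observed that the graph of $f$ is the zero level set of $F(x,y)=f(x)+y$ and that $\delta_F=\delta_f$ when $c_1\neq 0$, with the degenerate direction $c_1=0$, $c_2\neq0$ contributing nothing. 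You also correctly handle the one point a careless argument would miss: the paper defines APN as $\delta_f=2$, not $\delta_f\leq 2$, and your parity observation (the solution set of $f(x)+f(x+a)=b$ is invariant under the fixed-point-free involution $x\mapsto x+a$, hence of even size) shows $\delta_f\geq 2$ always, so the two conditions coincide. No gaps.
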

In this way, many Sidon sets of size $q=\sqrt{|A|}$ exist in $A$. Much less is known about Sidon sets of size $\sqrt{|A|}+1$. We present the best known examples in the next section.

\section{Sidon sets and double-error correcting codes} \label{sec:sidon-codes}

A \textit{linear code} with parameters $[N,K,d]_q$ is a linear subspace $C\leq \mathbb{F}_q^N$, with $\dim(C)=K$ and minimum Hamming distance $d$. A linear code is either given by its $K\times N$ \textit{generator matrix} $G$, or by its $(N-K)\times N$ \textit{parity check matrix} $H$:
\[C=\{xG \mid x \in \mathbb{F}_q^K\} = \{y\in \mathbb{F}_q^N \mid yH^T=0\}.\]
The linear code $C$ has minimum distance at least $d$ if and only if any $d-1$ columns of its parity check matrix are linearly independent. The code is said to be $t$-error correcting, if $2t+1\leq d$. Two codes $C_1,C_2\leq \mathbb{F}_q^N$ are \textit{permutation equivalent,} if there is a permutation $\pi \in S_N$ such that
\[(x_1,\ldots,x_n)\in C_1 \Longleftrightarrow (x_{1^\pi},\ldots,x_{n^\pi})\in C_2.\]
Permutation equivalent codes have the same parameters. 

Let $T$ be any subset of $\mathbb{F}_2^n$. Let $H_T$ be the $n\times (s-1)$ matrix whose columns are the elements of $T$. Let $C_T$ be the binary linear code with parity check matrix $H_T$. Strictly speaking, $H_T$ is only defined up to an ordering of the non-zero elements of $S$, and $C_T$ is defined up to permutation equivalence. Since this does not change the parameters, we use the sloppy notation $H_T$ and $C_T$. 

The relation between Sidon sets of $\mathbb{F}_2^n$ and linear codes of minimum distance $5$ seems to be a \textit{folklore} known fact, see \cite{Redman2021}. In the context of APN functions, we have to mention the seminal CCZ paper \cite{Carlet1998}, where the authors associated a linear $[N=2^m-1,k,5]_2$ code $C_F$ to the APN function $F$ by the parity check matrix
\[\begin{bmatrix}
1 & \alpha &  \alpha^2 & \cdots & \alpha^{N-1} \\
F(1) & F(\alpha) &  F(\alpha^2) & \cdots & F(\alpha^{N-1}) 
\end{bmatrix}.\] 

For the sake of completeness, we formulate the relation between Sidon sets and double-error correcting codes in the following lemma. Remember that for any Sidon set $S$, $S-a$ is Sidon too. Hence, we may assume $0\in S$ without loss of generality. Moreover, since we are looking for complete Sidon sets, we may assume that $S$ spans $\mathbb{F}_2^n$. 

\begin{lemma} \label{lm:sidon_dist5}
Let $S$ be a Sidon set of $\mathbb{F}_2^n$ with $0\in S$. Then $C_{S\setminus\{0\}}$ is a binary linear code of length $|S|-1$, dimension $|S|-1-n$, and minimum distance al least $5$. Conversely, let $C$ be a linear $[N,K,\geq 5]_2$-code with parity check matrix $H$. Let $T$ be the set of column vectors of $H$. Then $T\cup \{0\}$ is a Sidon set of size $N+1$ in $\mathbb{F}_2^{N-K}$. 
\end{lemma}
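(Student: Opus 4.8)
The plan is to route both directions through the minimum-distance criterion recalled above: a binary linear code with parity check matrix $H$ has minimum distance at least $5$ if and only if any four columns of $H$ are linearly independent. Over $\mathbb{F}_2$ a collection of columns is linearly dependent precisely when some nonempty sub-collection sums to $0$, so the existence of a nonzero codeword of weight $w$ is exactly the existence of $w$ distinct columns summing to $0$. Thus the whole lemma amounts to rephrasing ``no $1,2,3$ or $4$ columns sum to $0$'' as the Sidon property of $T\cup\{0\}$, where $T$ is the column set.

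For the direct statement, set $T=S\setminus\{0\}$, so $C_{S\setminus\{0\}}$ has length $|T|=|S|-1$. After deleting $0$ the columns are nonzero (no weight-$1$ word) and distinct (no weight-$2$ word). For weight $3$, a relation $v_1+v_2+v_3=0$ with distinct $v_i\in T$ rewrites as $v_1+v_2=v_3+0$; since $v_1,v_2,v_3,0$ are four distinct elements of $S$, the Sidon property forbids it. For weight $4$, $v_1+v_2+v_3+v_4=0$ rewrites as $v_1+v_2=v_3+v_4$ with four distinct elements of $S$, again forbidden. Hence the minimum distance is at least $5$. The dimension is $|T|-\operatorname{rank}(H_T)$, and under the standing assumption that $S$ spans $\mathbb{F}_2^n$ the rank is $n$, giving dimension $|S|-1-n$.

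For the converse, let $C$ be an $[N,K,\geq 5]_2$ code with parity check matrix $H$ and column set $T$. Minimum distance $\geq 5$ forces any single and any two columns to be independent, so every column is nonzero and the columns are pairwise distinct; therefore $|T\cup\{0\}|=N+1$ inside $\mathbb{F}_2^{N-K}$. To verify the Sidon condition, take $x,y,z,w\in T\cup\{0\}$, at least three distinct, with $x+y=z+w$, i.e.\ $x+y+z+w=0$. The crucial reduction is that in characteristic $2$ a single coincidence forces a second: if, say, $x=y$ then $z+w=0$ gives $z=w$, leaving at most two distinct values and contradicting the hypothesis, and the same happens for any other coinciding pair. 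Hence all four of $x,y,z,w$ are distinct, so either all lie in $T$ (four distinct columns summing to $0$) or exactly one equals $0$ (three distinct columns summing to $0$); both contradict the independence of four columns. Thus no such relation exists and $T\cup\{0\}$ is Sidon.

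The routine parts are the dimension bookkeeping and the weight-$1,2$ observations. The one step needing care is the characteristic-$2$ reduction in the converse, namely that ``at least three distinct among $x,y,z,w$ with $x+y+z+w=0$'' in fact forces all four to be distinct; this is what lets the single Sidon inequality split cleanly into the weight-$3$ and weight-$4$ column relations, and it is the heart of the equivalence.
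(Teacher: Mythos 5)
Your proof is correct and follows essentially the same route as the paper: exclude codewords of weight $1,2,3,4$ by translating each into a forbidden additive relation among elements of $S$ (using $0\in S$ for the weight-$3$ case), and invoke the standing spanning assumption for the dimension count. The paper dismisses the converse with ``the proof is similar,'' whereas you spell it out; your characteristic-$2$ observation that a single coincidence among $x,y,z,w$ with $x+y+z+w=0$ forces a second, so that ``at least three distinct'' implies all four distinct, is exactly the detail that makes that ``similar'' argument rigorous.
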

\begin{proof}
Write $T=S\setminus\{0\}$, and $d$ be the minimum distance of $C_T$. As $H_T$ has no zero column, $d>1$. The columns are different, hence $d>2$. Since $T\cup \{0\}$ is Sidon, $x_1+x_2+x_3=0$ has no solution in $T$, and $d>3$. Finally, $T$ itself is Sidon, and $x_1+x_2+x_3+x_4=0$ has no solution in $T$. This implies $d>4$. The proof of the converse is similar. 
\end{proof}

\begin{table}[]
\caption{The maximum sizes of Sidon sets in $\mathbb{F}_2^n$\label{tab:smalln}}
\begin{tabular}{l@{\hspace{1.5cm}}l@{\hspace{1.5cm}}l}
$n$ & $2^{n/2}$ & max $|S|$ \\ \hline
2   & $2$       & 3         \\
3   & $2.83$    & 4         \\
4   & $4$       & 6         \\
5   & $5.66$    & 7         \\
6   & $8$       & 9         \\
7   & $11.31$   & 12        \\
8   & $16$      & 18        \\
9   & $22.63$   & 24        \\
10  & $32$      & 34        \\
11  & $45.25$   & 48–58     \\
12  & $64$      & 66–89     \\
13  & $90.51$   & 82–125    \\
14  & $128$     & 129–179   \\
15  & $181.02$  & 152–254  
\end{tabular}
\end{table}

Double-error correcting binary linear codes with good parameters are known in the literature.

\begin{enumerate}[({C}1)]
\item The online database \cite{Grassl2007} of linear codes gives the main parameters of known linear codes up to length $256$. This allows us to compile Table \ref{tab:smalln} with the maximum sizes of Sidon sets in $\mathbb{F}_2^n$ for $n\leq 15$. 
\item Shortening of non-primitive binary BCH codes gives $[2^m,2^m-2m,5]_2$-codes, see \cite[p. 586]{MacWilliams1977}. These give rise to Sidon sets of size $2^{n/2}+1$ in $\mathbb{F}_2^n$ for $n=2m$. 
\item For $n=2m+1$ odd, shortened BCH $[2^m+2^{\lfloor (m+1)/2 \rfloor}-1,2^m+2^{\lfloor (m+1)/2 \rfloor}-2m-2, 5]$-code exists. The magnitude of the size of the corresponding Sidon set $S$ is $\frac{1}{\sqrt{2}} 2^{n/2}$. For small odd integers $n$, we have
\[\begin{array}{c|cccc}
n=2m+1 & 9 & 11 & 13 & 15 \\ \hline 
|S|=2^m+2^{\lfloor (m+1)/2 \rfloor} & 20 & 40 & 72 & 144
\end{array}\]
Comparison with Table \ref{tab:smalln} shows that for small $n$, these codes and Sidon sets are rather far from being optimal. 
\item Let $q=2^m$, $g(X)\in \mathbb{F}_q[X]$ irreducible of degree $2$, and $L=(x_1,\ldots,x_q)$ with $\{x_i\}=\mathbb{F}_q$. The binary Goppa code $\Gamma(L,g)$ has minimum distance $\geq 5$, length $2^m$, and dimension $\leq 2^m-2m$. In fact, the dimension equals $2^m-2m$ by the main result of \cite{Vlugt1990}. The corresponding Sidon set has size $q+1$.
\end{enumerate}

\begin{remark} \label{rem:goppa}
The generic element of the parity-check matrix of the binary Goppa code $\Gamma(L,g)$ has the form $t^i/g(t)$, $t\in L$. The Sidon set (C4) consists of the origin $(0,0)$ and the points
\[\left(\frac{1}{g(t)}, \frac{t}{g(t)}\right), \qquad t\in \mathbb{F}_q.\]
This is the set of $\mathbb{F}_q$-rational points of the conic $x^2 g(y/x)+x=0$.
\end{remark}

\section{Nonlinearity of APN functions and Sidon sets} \label{sec:nonlin}

Although not explicitely stated, Czerwinski has observed in \cite[Proposition 2.1]{Czerwinski2020}, that the level sets of APN functions are Sidon sets in $\mathbb{F}_2^n$. The next lemma makes this statement more precise by linking the $t$-thin parameter to the differential uniformity.

\begin{lemma} \label{lm:deltafthin}
The level sets of the map $f:V\to V'$ are $\delta_f$-thin Sidon sets. 
\end{lemma}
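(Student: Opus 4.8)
The plan is to fix an arbitrary value $b_0 \in V'$ and to work with the single level set $T = f^{-1}(b_0) = \{x \in V \mid f(x) = b_0\}$; since every level set arises this way, it is enough to verify the defining inequality $|T \cap (T+a)| \le \delta_f$ for this $T$. Following the companion lemma on $t$-thin sets, the substance of the thin condition lies in the nonzero shifts, so I would fix an arbitrary $a \in V \setminus \{0\}$ and bound $|T \cap (T+a)|$; the shift $a = 0$ merely returns $T \cap T = T$ and so is not at issue.

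The first step is to unwind the membership condition. As $V$ is a vector space over $\mathbb{F}_2$, one has $x \in T + a$ exactly when $x + a \in T$, that is, when $f(x+a) = b_0$. Consequently
\[
T \cap (T+a) = \{x \in V \mid f(x) = b_0 \text{ and } f(x+a) = b_0\}.
\]
Every element $x$ of this set satisfies the derivative equation $f(x) + f(x+a) = b_0 + b_0 = 0$, again by characteristic $2$, so $T \cap (T+a)$ is contained in $\{x \in V \mid f(x) + f(x+a) = 0\}$.

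It then remains to invoke the definition of the differential uniformity. For the chosen nonzero direction $a$ and the target value $b = 0$, the solution set $\{x \mid f(x) + f(x+a) = 0\}$ has cardinality at most $\delta_f$, since $\delta_f$ is by definition the maximum of such counts over all nonzero $a$ and all $b \in V'$. Combined with the inclusion above this yields $|T \cap (T+a)| \le \delta_f$, which is precisely the $\delta_f$-thin Sidon property for $T$.

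I do not expect a genuine obstacle: the argument is a direct unfolding of the definitions, and the only points needing a little care are the two appeals to characteristic $2$ (to identify the translate $T+a$ with $T-a$ and to collapse $b_0 + b_0$ to $0$) and the restriction to nonzero shifts $a$. As a bonus, the classical conclusion for APN maps comes for free: when $\delta_f = 2$ the level sets are $2$-thin, hence Sidon by Lemma \ref{lm:tSidon}(iii).
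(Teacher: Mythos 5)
Your proof is correct and follows essentially the same route as the paper's: unwind membership in $T\cap(T+a)$ for a nonzero shift $a$, observe that every such $x$ satisfies the derivative equation $f(x)+f(x+a)=0$, and bound the solution count by the definition of $\delta_f$ with target $b=0$. The only cosmetic difference is that you use a containment where the paper states an equivalence, which changes nothing.
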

\begin{proof}
Write $S=f^{-1}(b)$ for $b\in V'$. For any $a\in V\setminus\{0\}$, we have
\begin{align*}
x \in S\cap (S+a) &\Leftrightarrow f(x)=b \text{ and } f(x+a)=b \\
&\Leftrightarrow f(x)=b \text{ and } f(x)+f(x+a)=0.
\end{align*}
As $f(x)+f(x+a)=0$ has at most $\delta_f$ solutions, $|S\cap (S+a)|\leq \delta_f$. 
\end{proof}

The proof of the next lemma is analogous to \cite[Proposition 2]{Carlet1998}, the details are left to the reader. 

\begin{lemma} \label{lm:samedelta}
Let $f,\alpha:V\to V'$ be maps. If $\alpha$ is affine then $f$ and $f+\alpha$ have the same differential uniformity.
\end{lemma}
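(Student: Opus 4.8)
The plan is to reduce everything to a single observation: for an affine map, the first-order derivative $x\mapsto \alpha(x)+\alpha(x+a)$ does not depend on $x$. Granting this, the equation defining $\delta_{f+\alpha}$ differs from the one defining $\delta_f$ only by a translation of the right-hand side $b$, and since $b$ ranges over all of $V'$ this translation cannot affect the maximum.

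First I would establish the derivative claim. Fix $a\in V$ and take arbitrary $x,y\in V$. Applying the defining identity $\alpha(u+v+w)=\alpha(u)+\alpha(v)+\alpha(w)$ once with $(u,v,w)=(x+a,y,0)$ and once with $(u,v,w)=(y+a,x,0)$, both triples having sum $x+y+a$, yields
\[\alpha(x+a)+\alpha(y)+\alpha(0)=\alpha(x+y+a)=\alpha(y+a)+\alpha(x)+\alpha(0).\]
Cancelling $\alpha(0)$ and rearranging in characteristic $2$ gives $\alpha(x)+\alpha(x+a)=\alpha(y)+\alpha(y+a)$, so the quantity $c_a:=\alpha(x)+\alpha(x+a)$ is indeed a constant depending only on $a$.

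Next I would transport this to $g:=f+\alpha$. For every $a\in V\setminus\{0\}$ and $x\in V$,
\[g(x)+g(x+a)=\bigl(f(x)+f(x+a)\bigr)+\bigl(\alpha(x)+\alpha(x+a)\bigr)=\bigl(f(x)+f(x+a)\bigr)+c_a.\]
Hence for each $b\in V'$ the solution set $\{x\mid g(x)+g(x+a)=b\}$ coincides with $\{x\mid f(x)+f(x+a)=b+c_a\}$ (recall that $-c_a=c_a$ here). Taking cardinalities and maximising over $a\neq 0$ and $b$, I note that for each fixed $a$ the substitution $b\mapsto b+c_a$ is a bijection of $V'$; the two maxima therefore range over the same multiset of values and agree, so $\delta_g=\delta_f$.

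The only genuine content is the derivative claim; once the derivative of $\alpha$ is seen to be constant in $x$, the equality of differential uniformities is pure bookkeeping. I do not anticipate any real obstacle—the characteristic-$2$ setting even removes the need to track signs—which is consistent with the statement that the details are left to the reader.
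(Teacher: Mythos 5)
Your proof is correct and is exactly the argument the paper intends: the paper itself omits the details (referring to \cite[Proposition 2]{Carlet1998} and leaving them to the reader), and the standard argument there is precisely yours — the derivative $\alpha(x)+\alpha(x+a)$ of an affine map is constant in $x$, so passing from $f$ to $f+\alpha$ merely translates the parameter $b$, which is immaterial when maximising over all $b\in V'$. Both the derivation of the constant-derivative claim from the identity $\alpha(x+y+z)=\alpha(x)+\alpha(y)+\alpha(z)$ and the final bookkeeping with the bijection $b\mapsto b+c_a$ are sound.
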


\begin{proof}[Proof of Theorem \ref{thm:nonlinearity}]
Let $f,\alpha:V\to V'$ be maps and assume that $\alpha$ is affine. Let $S$ be the set of $x\in V$ such that $f(x)=\alpha(x)$. On the one hand, 
\[d_H(f,\alpha)=|V|-|S|=2^n-|S|.\]
On the other hand, $S=(f+\alpha)^{-1}(0)$, and it is $\delta_f$-thin Sidon by Lemma \ref{lm:deltafthin} and \ref{lm:samedelta}. This implies
\[|S|\leq \sqrt{\delta_f \cdot 2^n} + \frac{1}{2},\]
and
\[d_H(f,\alpha)\geq 2^n-\sqrt{\delta_f} \cdot 2^{n/2} - \frac{1}{2} \qedhere\]
\end{proof}

For APN functions, Theorem \ref{thm:nonlinearity} uses the upper bound $|S|\leq \sqrt{2} \cdot 2^{n/2}$ for Sidon sets in $\mathbb{F}_2^n$. With regard to the problem on Sidon sets in elementary abelian 2-groups, we mentioned that all but one known Sidon sets of $\mathbb{F}_2^n$ have size $\leq 2^{n/2}+2$. This motivates the following proposition.

\begin{proposition} \label{pr:APNstrong}
Let $n$ be a positive integer such that $|S|\leq 2^{n/2}+2$ holds for all Sidon sets of $\mathbb{F}_2^n$. Then for any APN function $f:\mathbb{F}_2^n\to \mathbb{F}_2^n$ we have
\[\NL(f)\geq 2^n-2^{n/2}-2.\]
\end{proposition}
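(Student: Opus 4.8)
The plan is to mimic the proof of Theorem~\ref{thm:nonlinearity} verbatim, but to feed in the sharper size bound $|S|\leq 2^{n/2}+2$ in place of the generic $t$-thin bound $|S|\leq \sqrt{2}\cdot 2^{n/2}+\tfrac12$. First I would take an arbitrary APN function $f:\mathbb{F}_2^n\to\mathbb{F}_2^n$ and an arbitrary affine map $\alpha\in\Aff(\mathbb{F}_2^n,\mathbb{F}_2^n)$, and set $S=\{x\mid f(x)=\alpha(x)\}$, exactly as before. As in the proof of Theorem~\ref{thm:nonlinearity}, on the one hand $d_H(f,\alpha)=2^n-|S|$, and on the other hand $S=(f+\alpha)^{-1}(0)$ is a level set of $f+\alpha$.

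The key observation is that $f+\alpha$ is itself APN: by Lemma~\ref{lm:samedelta}, $f$ and $f+\alpha$ have the same differential uniformity, so $\delta_{f+\alpha}=\delta_f=2$. Hence by Lemma~\ref{lm:deltafthin} the set $S$ is a $2$-thin Sidon set, and by Lemma~\ref{lm:tSidon}(iii) it is genuinely a Sidon set in the elementary abelian $2$-group $\mathbb{F}_2^n$. This is the step where the hypothesis on $n$ comes into play: by assumption, \emph{every} Sidon set in $\mathbb{F}_2^n$ satisfies $|S|\leq 2^{n/2}+2$, so in particular $|S|\leq 2^{n/2}+2$.

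Combining the two expressions gives
\[
d_H(f,\alpha)=2^n-|S|\geq 2^n-2^{n/2}-2.
\]
Since $\alpha$ was an arbitrary affine map, taking the minimum over $\alpha\in\Aff(\mathbb{F}_2^n,\mathbb{F}_2^n)$ yields $\NL(f)=\min_\alpha d_H(f,\alpha)\geq 2^n-2^{n/2}-2$, which is the claim.

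I do not expect any genuine obstacle here, as the argument is a direct substitution into the already-established machinery. The only point requiring a moment's care is the verification that $S$ qualifies as a Sidon set to which the hypothesis applies: one must invoke Lemma~\ref{lm:samedelta} to see that $f+\alpha$ remains APN (this is what guarantees $2$-thinness), and then Lemma~\ref{lm:tSidon}(iii), which is specific to exponent-$2$ groups, to upgrade ``$2$-thin Sidon'' to ``Sidon'' so that the hypothesized bound is applicable. With these in hand the proof is essentially a one-line computation.
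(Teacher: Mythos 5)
Your proof is correct and is exactly the argument the paper intends (the paper leaves it implicit, noting only that one substitutes the hypothesized bound $|S|\leq 2^{n/2}+2$ into the proof of Theorem~\ref{thm:nonlinearity}): the level set $S=(f+\alpha)^{-1}(0)$ is $2$-thin Sidon by Lemmas~\ref{lm:samedelta} and~\ref{lm:deltafthin}, hence genuinely Sidon by Lemma~\ref{lm:tSidon}(iii), so the assumed bound applies and gives $d_H(f,\alpha)\geq 2^n-2^{n/2}-2$ for every affine $\alpha$. You also correctly isolate the one step that differs from the theorem's proof, namely the upgrade from ``$2$-thin Sidon'' to ``Sidon'' needed to invoke the hypothesis.
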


Notice that the lower bound of Proposition \ref{pr:APNstrong} is very close to the upper bound of the Liu-Mesnager-Chen Conjecture.

\section{Conics as Sidon sets} \label{sec:conics}

In this section, $n=2m$ and $q=2^m$. The underlying abelian group is $A=\mathbb{F}_2^{2m} \cong \mathbb{F}_q\times \mathbb{F}_q \cong \mathbb{F}_{q^2}$. Recently, Carlet and Mesnager \cite{Carlet2022} showed the existence of Sidon sets of size $q+1$ in $A$. They proved that the cyclic multiplicative subgroup of order $q+1$ is a Sidon set in $\mathbb{F}_{q^2}$. In this section, we identify this set as an ellipse of the affine plane $AG(2,q)$. We give an independent proof for the Sidon property, that will also show that for even $m$, adding $0$ to the subgroup still has the Sidon property. Similar results hold for hyperbolas of $AG(2,q)$, as well. 

Let $\gamma$ be an element of $\mathbb{F}_q$ such that the roots $\delta, 1/\delta$ of the polynomial $X^2+\gamma X+1$ are in $\mathbb{F}_{q^2}\setminus\mathbb{F}_q$. We use the map $\Delta:(x,y) \mapsto x+\delta y$ to identify $\mathbb{F}_q^2$ and $\mathbb{F}_{q^2}$. Any affine collineation is the composition of a linear map, a translation, and a Frobenius map $(x,y)\mapsto (x^{2^k},y^{2^k})$. By $\Delta$, an affine collineation induces a group automorphism of $A$. The converse is not true, the map $(x,y)\mapsto (x,y^2)$ is a counter-example. 

A \textit{conic} $C$ of $AG(2,q)$ is given by an irreducible quadratic equation $Q(X,Y)=0$. Using the composition of an $\mathbb{F}_q$-linear map and a translation, the equation can be transformed to one of the following forms:
\begin{align*}
\text{hyperbola:} \qquad & H:XY=1, \\
\text{parabola:} \qquad & P:Y=X^2, \\
\text{ellipse:} \qquad & E:X^2+\gamma XY+Y^2=1. 
\end{align*}
The number of $\mathbb{F}_q$-rational points of a hyperbola, parabola or ellipse is $q-1$, $q$, or $q+1$, respectively. The latter follows from the parametrization
\begin{align}
\left\{\left(\frac{\gamma}{t^2+\gamma t+1},\frac{t^2+1}{t^2+\gamma t+1}\right) \mid t\in \mathbb{F}_q\right\} \cup \{(0,1)\}
\end{align}
of the ellipse $E$. This implies that ellipses, hyperbolas and parabolas cannot be equivalent neither under affine maps, nor under additive automorphisms. 

Another important fact is that the tangents of a conic pass through a common point. This point is called the \textit{nucleus} of the conic. For $H$ and $E$, the nucleus is the origin, the nucleus of the parabola $P$ is the point at infinity of the $y$-axis. 

When we switch to the projective coordinate frame $(x_1:x_2:x_3)$, $x=x_1/x_3$, $y=x_2/x_3$, we see that the hyperbola $H$ has two points at infinity $(1:0:0)$ and $(0:1:0)$, the parabola $P$ has one point at infinity $(0:1:0)$, and the points at infinity $(1:\delta:0)$, $(\delta:1:0)$ of the ellipse $E$ are defined over $\mathbb{F}_{q^2}$. 

\begin{lemma} \label{lm:groups}
Let $C:Q(X,Y)=0$ be an ellipse or a hyperbola of $AG(2,q)$, $q=2^m$. Let $N$ denote the nucleus of $C$. For $s\in \mathbb{F}_q$, define the set 
\[\mathcal{D}_s=\{(x,y) \mid Q(x,y)=s\}\]
of affine points. 
\begin{enumerate}[(i)]
\item There is a cyclic affine linear group $G$ that preserves $N$ and acts regularly on $C$. The only fixed point of $G$ is $N$. 
\item If $s\neq 0$, then $\mathcal{D}_s$ is a $G$-orbit. 
\item If $C=E$, then $\mathcal{D}_0=\{N \}$. 
\item If $C=H$, then $\mathcal{D}_0$ is the union of the two asymptotes, intersecting in $N$. $\mathcal{D}_0$ decomposes into one orbit of size $1$ (the nucleus), and two orbits of size $q-1$ (the punctured asymptotes). 
\end{enumerate} 
\end{lemma}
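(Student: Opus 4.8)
The plan is to realise the quadratic form $Q$ as the norm form of a two-dimensional \'etale algebra over $\mathbb{F}_q$, so that the level sets $\mathcal{D}_s$ become fibres of a norm map and $G$ becomes the associated norm-one torus acting by multiplication. For the ellipse I would use the identification $\Delta:(x,y)\mapsto z=x+\delta y$ with $\mathbb{F}_{q^2}$. Since $\delta\notin\mathbb{F}_q$ is a root of the irreducible polynomial $X^2+\gamma X+1$ whose roots are $\delta,\delta^{-1}$, the Frobenius sends $\delta$ to its conjugate, i.e.\ $\delta^q=\delta^{-1}$; hence for $x,y\in\mathbb{F}_q$,
\[
z\cdot z^q=(x+\delta y)(x+\delta^{-1}y)=x^2+(\delta+\delta^{-1})xy+y^2=x^2+\gamma xy+y^2=Q_E(x,y).
\]
Thus $Q_E$ is the relative norm $\mathrm{Nm}=\mathrm{Nm}_{\mathbb{F}_{q^2}/\mathbb{F}_q}$, and the ellipse $E=\{Q_E=1\}$ is precisely the kernel $U=\{z:z^{q+1}=1\}$ of the norm, the cyclic subgroup of order $q+1$. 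For the hyperbola I would instead identify $\mathbb{F}_q^2$ with the split algebra $\mathbb{F}_q\times\mathbb{F}_q$ through the coordinates themselves, under which $Q_H(x,y)=xy$ is the split norm $\mathrm{Nm}(x,y)=xy$ and $H=\{xy=1\}$ is the norm-one torus $\{(\lambda,\lambda^{-1}):\lambda\in\mathbb{F}_q^\times\}$ of order $q-1$.

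With this in hand I would take $G$ to be the group of multiplications $z\mapsto\zeta z$ by the norm-one elements $\zeta$ (so $\zeta\in U$ in the ellipse case, $\zeta=(\lambda,\lambda^{-1})$ in the hyperbola case). Each such multiplication is $\mathbb{F}_q$-linear, hence an affine linear collineation fixing $N=0$, and $G$ is cyclic since it is a subgroup of the cyclic group $\mathbb{F}_{q^2}^\times$ (respectively isomorphic to the cyclic $\mathbb{F}_q^\times$). Because $\mathrm{Nm}(\zeta)=1$, multiplication by $\zeta$ preserves every fibre $\mathcal{D}_s=Q^{-1}(s)$. For the regularity I would note that the norm is surjective onto $\mathbb{F}_q^\times$, so each $\mathcal{D}_s$ with $s\neq 0$ is nonempty; fixing $z_0\in\mathcal{D}_s$, any $z$ with $Q(z)=s$ satisfies $\mathrm{Nm}(z/z_0)=1$ in the ambient algebra, so $z=\zeta z_0$ for a unique norm-one $\zeta$, whence $\mathcal{D}_s$ is a single free $G$-orbit. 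Taking $s=1$ yields the regular action on $C$ in~(i) and establishes~(ii); and since $\zeta z=z$ with $\zeta\neq 1$ forces $z=0$, the only common fixed point of $G$ is $N$, completing~(i).

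Finally I would treat the degenerate fibre $\mathcal{D}_0$ separately, which is exactly where the split/non-split dichotomy surfaces. In the ellipse case the norm form is anisotropic (a field norm vanishes only at $0$), so $\mathcal{D}_0=\{0\}=\{N\}$, giving~(iii). In the hyperbola case $\mathcal{D}_0=\{xy=0\}$ is the union of the two coordinate axes, namely the asymptotes meeting at $N=(0,0)$; removing $N$ leaves the punctured axes $\{(0,y):y\neq 0\}$ and $\{(x,0):x\neq 0\}$, on each of which $G$ acts freely and transitively (via $y\mapsto\lambda^{-1}y$ and $x\mapsto\lambda x$), producing the fixed point together with two orbits of size $q-1$, as in~(iv). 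The only genuinely \emph{delicate} point is the norm identification — in particular the computation $\delta^q=\delta^{-1}$ and the resulting recognition of $Q$ as a split or non-split norm — since everything after that follows formally from the regular action of the norm-one torus on the nonzero fibres; I would also phrase the two cases in parallel so that the single argument for~(i)--(ii) covers both conics simultaneously.
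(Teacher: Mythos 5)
Correct, and essentially the same approach as the paper: the paper's explicit cyclic linear groups $G_H=\left\{\left[\begin{smallmatrix} a&0\\ 0&a^{-1}\end{smallmatrix}\right] : a\in\mathbb{F}_q^*\right\}$ and $G_E=\left\{\left[\begin{smallmatrix} a&b\\ b&a+\gamma b\end{smallmatrix}\right] : (a,b)\in E\right\}$ are precisely the matrices of multiplication by norm-one elements of the split algebra $\mathbb{F}_q\times\mathbb{F}_q$ and of $\mathbb{F}_{q^2}$ in the basis $\{1,\delta\}$, i.e.\ exactly the groups you construct. The only difference is presentational: the paper exhibits these matrix groups and leaves the orbit statements to ``direct calculation,'' whereas your norm-form formulation (multiplicativity and surjectivity of the norm, invertibility of elements of nonzero norm) supplies those details formally.
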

\begin{proof}
We choose affine coordinates such that $C$ is either $E$ or $H$. The hyperbola $H$ is left invariant by the cyclic linear group 
\[G_H=\left\{\begin{bmatrix} a&0 \\ 0&a^{-1}\end{bmatrix} \mid a \in \mathbb{F}_q^*\right\}.\]
The ellipse $E$ is left invariant by the cyclic linear group
\[G_E=\left\{\begin{bmatrix} a&b \\ b&a+\gamma b\end{bmatrix} \mid (a,b)\in E\right\}.\]
The statements on the orbits follow by direct calculation.
\end{proof}

\begin{proposition} \label{pr:conicsidon}
Let $C$ be an ellipse or a hyperbola in $AG(2,q)$, $q=2^m$. 
\begin{enumerate}[(i)]
\item For all $q$, $C$ is a Sidon set in $\mathbb{F}_q^2$.
\item Let $N$ be the nucleus of $C$. $C\cup \{N\}$ is Sidon if and only if $3$ does not divide $|C|$. 
\end{enumerate}
\end{proposition}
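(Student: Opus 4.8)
The plan is to prove the two parts separately, in each case replacing the Sidon property by the $2$-thin property, which is legitimate in the elementary abelian $2$-group $\mathbb{F}_q^2$ by Lemma \ref{lm:tSidon}(iii). Throughout I fix affine coordinates so that $C=\{Q=1\}$, where $Q=XY$ for the hyperbola and $Q=X^2+\gamma XY+Y^2$ for the ellipse, and the nucleus $N$ is the origin $0$; note $Q(0)=0$, so $N\notin C$. For part (i) let $B(u,u')=Q(u+u')+Q(u)+Q(u')$ be the symmetric $\mathbb{F}_q$-bilinear polarization of $Q$, so that $Q(x+a)=Q(x)+Q(a)+B(x,a)$. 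For $a\neq 0$ a point $x$ lies in $C\cap(C+a)$ exactly when $Q(x)=1$ and $Q(x+a)=1$; cancelling, the second equation becomes the affine-linear relation $B(x,a)=Q(a)$, so $C\cap(C+a)=C\cap\ell_a$ with $\ell_a=\{z\mid B(z,a)=Q(a)\}$. A direct computation gives $B((x,y),(a_1,a_2))=\gamma(xa_2+a_1y)$ for the ellipse and $xa_2+a_1y$ for the hyperbola, so $B$ is non-degenerate; hence for $a\neq 0$ the functional $B(\cdot,a)$ is nonzero and $\ell_a$ is a genuine affine line. Since a line meets the conic $C$ in at most two points, $|C\cap(C+a)|\le 2$, so $C$ is $2$-thin and therefore Sidon.

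For part (ii) I take $N=0$. Because $C$ is already Sidon, I would first reduce to a statement purely about $C$: namely that $C\cup\{0\}$ is Sidon if and only if no three distinct points of $C$ sum to $0$. Indeed, any forbidden relation $x+y=z+w$ (with at least three distinct entries) in $C\cup\{0\}$ must involve the new point $0$, as $C$ is Sidon; using $0\notin C$, a short case check rules out two or more zero entries and shows that a single zero forces $y=z+w$ with $y,z,w\in C$ pairwise distinct, i.e. $y+z+w=0$. Conversely such a triple together with $0$ gives four distinct elements violating the Sidon property.

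It then remains to decide when $C$ contains three distinct points summing to $0$, and here I would use the cyclic group structure furnished by Lemma \ref{lm:groups}, realized concretely as follows. Identify $\mathbb{F}_q^2$ with the quadratic $\mathbb{F}_q$-algebra $R$, equal to $\mathbb{F}_{q^2}$ for the ellipse (via the map $\Delta$) and to $\mathbb{F}_q\times\mathbb{F}_q$ for the hyperbola; under this identification $Q$ is the norm of $R$ and $C=\{Q=1\}$ is the cyclic group of norm-one elements, of order $q+1$ resp.\ $q-1$, with $G$ acting as multiplication by $C$. Multiplying a relation $y+z+w=0$ of distinct norm-one elements by $y^{-1}$ normalizes it to $1+v+(v+1)=0$ with $v\in C\setminus\{1\}$ and $v+1\in C$. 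The requirement $v+1\in C$, expanded through $Q(v+1)=Q(v)+Q(1)+\tr(v)$ with $Q(v)=Q(1)=1$, gives $\tr(v)=1$; the algebra identity $v^2+\tr(v)v+Q(v)=0$ then reads $v^2+v+1=0$, so $v$ has order $3$ in $C$. Conversely an order-$3$ element $v\in C$ yields the distinct triple $1,v,v^2$ with sum $1+v+v^2=0$. As $C$ is cyclic of order $|C|$, such an element exists if and only if $3\mid|C|$, and combining this with the reduction above shows that $C\cup\{N\}$ is Sidon exactly when $3\nmid|C|$.

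The routine part is (i), where the polarization immediately exhibits $C\cap(C+a)$ as a line–conic intersection. I expect the main obstacle to lie in (ii): making the case analysis for the forbidden relations airtight, and—more essentially—pushing the ``triple summing to zero'' criterion through the algebra uniformly for both conic types, so that the single divisibility condition $3\mid|C|$ correctly captures both $3\mid q+1$ for the ellipse and $3\mid q-1$ for the hyperbola, in agreement with the parity dichotomy of Theorem \ref{thm:conics}.
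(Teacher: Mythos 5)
Your proof is correct, but it takes a genuinely different route from the paper on both parts. For (i), the paper compares the two conics $C$ and $C+a$: a parity argument (odd $|C|$, fixed-point-free translation) shows they are distinct, and since they share their two points at infinity, B\'ezout leaves at most two affine common points. You instead polarize $Q$ and identify $C\cap(C+a)$ as the intersection of $C$ with the affine line $B(\cdot,a)=Q(a)$, which is a cleaner, more self-contained way to get the $2$-thin property (only a line--conic intersection is needed). For (ii), the paper argues geometrically: a zero-sum triple $P_1+P_2+P_3=N$ determines an affine collineation $\alpha$ of order $3$ cycling the $P_i$; since sums of three points are affine-invariant in characteristic $2$, $\alpha$ fixes $N$, a tangent/nucleus argument shows $\alpha(C)=C$, and a fixed-point count mod $3$ forces all $\alpha$-orbits on $C$ to have size $3$, whence $3\mid|C|$; the converse uses an order-$3$ element of the group $G$ of Lemma \ref{lm:groups}. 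You replace all of this by the norm-one group of the quadratic algebra ($\mathbb{F}_{q^2}$ for the ellipse, $\mathbb{F}_q\times\mathbb{F}_q$ for the hyperbola): dividing the triple by one of its entries and computing with trace and norm yields $v^2+v+1=0$, i.e.\ an element of order $3$ in the cyclic group $C$, and conversely. The two proofs are secretly linked --- the paper's $\alpha$ is exactly multiplication by your $v$ --- but your forward direction trades the paper's delicate geometric steps (that two conics sharing three points and the tangents there must coincide, and that $\alpha$ has no fixed point besides $N$) for a short norm/trace computation, at the cost of invoking the algebra structure explicitly. One small point you should make explicit: in the hyperbola case $R=\mathbb{F}_q\times\mathbb{F}_q$ has zero divisors, so ``$v^3=1$, $v\neq1$ implies $v^2+v+1=0$'' is not automatic in $R$; it does hold here because $N(v)=1$ forces both components of $v$ to be nontrivial cube roots of unity, but as written this step is only justified for the field case.
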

\begin{proof}
Fix an element $a\in \mathbb{F}_q^2$, $a\neq (0,0)$, and let $\tau$ be the translation $x\mapsto x+a$. As $C$ has an odd number of points, and $\tau$ is fixed point free, we cannot have $C=\tau(C)$. The two conics $C$ and $\tau(C)$ have two points in common on the line at infinity. Hence, they cannot have more than 2 affine points in common. This shows $|C\cap (C+a)|\leq 2$, which implies (i) by Lemma \ref{lm:tSidon}(iii).

Let us now assume that $C\cup \{N\}$ is not Sidon, that is, 
\[P_1+P_2+P_3=N\] 
with $P_1,P_2,P_3 \in C$. There is a unique affine transformation $\alpha$ which maps $P_1,P_2,P_3$ to $P_2,P_3,P_1$, respectively. $\alpha$ has order $3$ and $\alpha(N)=N$. The nucleus of the conic $C'=\alpha(C)$ is $N=\alpha(N)$. This implies that $C$ and $C'$ share the points $P_1,P_2,P_3$ and the tangents $NP_1$, $NP_2$, $NP_3$. It follows that $C=C'$. The total number of points of $AG(2,q)$ is $q^2\equiv 1 \pmod{3}$, and $\alpha$ cannot have more than $2$ fixed points. Therefore the only fixed point of $\alpha$ is $N$, and $3$ divides $|C|$. 

Conversely, assume that $3$ divides $|C|$. Then there is an affine transformation $\alpha$ of order $3$ such that $C=\alpha(C)$. The nucleus $N$ is fixed by $\alpha$, and $\alpha$ has no further fixed points. For any $P\in C$, the points $P$, $\alpha(P)$, $\alpha^2(P)$ are distinct. As their sum is fixed by $\alpha$, we must have
\[P+\alpha(P)+\alpha^2(P)=N.\]
This shows that $C\cup \{N\}$ is not Sidon and the proof of (ii) is complete. 
\end{proof}

\begin{remark} \label{rem:hyperbola}
In Proposition \ref{pr:conicsidon}, the case when $C$ is a hyperbola and $3$ does not divide $|C|$ is not new. As $|C|=q-1$, the divisibility condition is equivalent with $m$ being odd. Choose the coordinate frame such that $C:XY=1$. Then $N=(0,0)$ and $C\cup\{N\}$ is the graph of the function $f(x)=x^{q-2}$. This function, used as S-box in AES, is well-known to be APN, hence its graph is Sidon. 
\end{remark}

\begin{remark} \label{rem:ellipse}
\begin{enumerate}[a)]
\item The bijection $\Delta:\mathbb{F}_{q}^2\to \mathbb{F}_{q^2}$ identifies the Carlet-Mesnager Sidon set of $\mathbb{F}_{q^2}$ with the points of the ellipse in $\mathbb{F}_q^2$. The above result shows that the Carlet-Mesnager set can be extended to a Sidon set of size $q+2$ for $m$ even. 
\item By Remark \ref{rem:goppa}, the Sidon set (C4) obtained from the binary Goppa code is also equivalent with the ellipse $E$. 
\item Using the terminology of double-error correcting codes, Chen \cite{Chen1991} noticed that for $m\in \{4,5,6\}$, (C4) can be extended by one point. We will see later that the extension is unique, hence Chen's result is a particular case of Proposition \ref{pr:conicsidon}. 
\end{enumerate}
\end{remark}

\section{Completness results} \label{sec:completeness}

We continue using the notation of the previous section, $m$ is a positive integer, $q=2^m$, $\gamma \in \mathbb{F}_q$ such that $X^2+\gamma X+1$ is irreducible over $\mathbb{F}_q$, and $\delta,1/\delta \in \mathbb{F}_{q^2}$ are the roots of $X^2+\gamma X+1$.

\begin{lemma} \label{lm:hyperbolacomplete}
Let $q\geq 16$ be a power of two, and $c\in \mathbb{F}_q\setminus\{0,1\}$. There are elements $x,y\in \mathbb{F}_q\setminus\{0\}$ such that
\[x^2y+xy^2+c xy+x^2+y^2+x+y=0\]
\end{lemma}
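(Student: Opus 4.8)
The plan is to pass to the symmetric functions of $x$ and $y$, reduce the existence of a solution to the existence of an $\mathbb{F}_q$-rational point on a genus-one curve, and then invoke the Hasse--Weil bound. First I would set $s=x+y$ and $p=xy$. Since $\mathrm{char}=2$ we have $x^2+y^2=(x+y)^2$, and the left-hand side factors as $xy(x+y+c)+(x+y)^2+(x+y)$, so the equation becomes $p(s+c)=s(s+1)$. If $s=c$ this forces $c(c+1)=0$, impossible for $c\notin\{0,1\}$; hence any solution has $s\neq c$ and $p=s(s+1)/(s+c)$. Given $s,p$, the pair $\{x,y\}$ is recovered as the roots of $T^2+sT+p$, and the requirement $x,y\neq 0$ is exactly $p\neq 0$, i.e.\ $s\notin\{0,1\}$. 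For $s\neq 0$ this quadratic splits over $\mathbb{F}_q$ iff $\tr(p/s^2)=0$, and a short computation gives $p/s^2=R(s):=(s+1)/(s(s+c))$. Thus it suffices to produce $s\in\mathbb{F}_q\setminus\{0,1,c\}$ with $\tr(R(s))=0$.

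Next I would reinterpret $\tr(R(s))=0$ as the solvability of $U^2+U=R(s)$, i.e.\ as asking for an $\mathbb{F}_q$-point on the Artin--Schreier curve $\mathcal{C}\colon U^2+U=R(s)$ with $s\notin\{0,1,c\}$. Since $c\neq 0$, the rational function $R$ has exactly two poles, both simple, at $s=0$ and $s=c$ (its numerator has smaller degree than its denominator, so there is no pole at infinity); in particular $R$ is not of the form $W^2+W$, so $\mathcal{C}$ is geometrically irreducible. A Riemann--Hurwitz computation for the degree-two cover $\mathcal{C}\to\mathbb{P}^1$, with different exponent $m_i+1=2$ at each of the two simple poles, gives $2g-2=2(-2)+(2+2)=0$, so $\mathcal{C}$ has genus $1$.

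Finally I would apply Hasse--Weil: the smooth model of $\mathcal{C}$ has $N\geq q+1-2\sqrt{q}$ rational points. At most four of these lie over $s\in\{\infty,0,c\}$ (at most two at infinity, one totally ramified point over each of $s=0,c$), so the number of affine points with $s\notin\{0,c\}$ is at least $q-3-2\sqrt{q}$. These occur in pairs $(s,U),(s,U+1)$, one pair per admissible $s$, and exactly one such pair has $s=1$ (note $R(1)=0$, so $s=1$ always satisfies the trace condition and must be discarded). Hence the number of $s\in\mathbb{F}_q\setminus\{0,1,c\}$ with $\tr(R(s))=0$ is at least $(q-5-2\sqrt{q})/2$, which is positive precisely when $q\geq 16$ --- matching the hypothesis --- and any such $s$ yields the desired $x,y\in\mathbb{F}_q\setminus\{0\}$. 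The main obstacle is the genus and irreducibility bookkeeping for the Artin--Schreier cover in characteristic two: one must confirm that both poles are simple (so the different exponents are $2$ and not larger), that $R$ is not of the form $W^2+W$ so that $\mathcal{C}$ does not split, and that the exceptional places ($s\in\{\infty,0,c\}$ together with the spurious $s=1$) are correctly identified and removed before dividing the point count by two.
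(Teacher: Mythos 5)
Your proof is correct, and it arrives at the same numerical threshold $q-5-2\sqrt{q}>0$ as the paper, but by a genuinely different route. The paper treats the equation directly as a plane cubic $\Gamma_c$ in the variables $x,y$: it verifies via partial derivatives that $\Gamma_c$ has no singular points (affine or at infinity), deduces absolute irreducibility and genus $1$, applies Hasse--Weil, and subtracts the three points at infinity together with the three trivial affine solutions $(0,0)$, $(1,0)$, $(0,1)$. You instead pass to the symmetric functions $s=x+y$, $p=xy$, solve $p=s(s+1)/(s+c)$, and translate the splitting of $T^2+sT+p$ over $\mathbb{F}_q$ into the trace condition $\tr(R(s))=0$ with $R(s)=(s+1)/(s(s+c))$, which you then count via the Artin--Schreier curve $U^2+U=R(s)$. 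The two curves are in fact birationally equivalent (substitute $x=sU$, $y=s(U+1)$ in $\Gamma_c$), so the same genus-$1$ point count powers both arguments; your model is the one adapted to the symmetry $(x,y)\mapsto(y,x)$. What your route buys: a one-variable reformulation that makes the division by two transparent (each admissible $s$ corresponds to the unordered pair $\{x,y\}$), and that could equally well be finished by a Weil character-sum bound without ever naming a curve. What it costs: you need the Artin--Schreier irreducibility criterion (a simple pole rules out $R=W^2+W$), the wild-ramification genus computation in characteristic $2$, and careful bookkeeping of the exceptional places $s\in\{\infty,0,c\}$ plus the spurious $s=1$, whereas the paper's smoothness check is more elementary and self-contained. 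All of your steps verify: both poles of $R$ are simple precisely because $c\neq 0,1$; the different exponents equal $2$, giving $2g-2=2(-2)+(2+2)=0$; at most four rational points lie over $\{\infty,0,c\}$; and $R(1)=0$ indeed forces discarding one pair, yielding at least $(q-5-2\sqrt{q})/2>0$ admissible values of $s$ for $q\geq 16$.
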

\begin{proof}
The algebraic plane curve $\Gamma_c:X^2Y+XY^2+cXY+X^2+Y^2+X+Y=0$ has degree $3$ with three distinct points at infinity. The line at infinity is not a component, hence, the points at infinity of $\Gamma_c$ are smooth. Assume that $(x,y)$ is a singular affine point of $\Gamma_c$. Taking partial derivatives, we obtain $(x+y)(x+y+c)=0$. If $x=y$, then $cxy=0$, which implies $x=y=0$. However, $(0,0)$ is a smooth point. Furthermore, plugging $y=x+c$ into the equation of $\Gamma_c$, we get $c^2+c\neq 0$. This shows that $\Gamma_c$ has no singular points, in particular, $\Gamma_c$ is irreducible of genus $1$. By the Hasse-Weil Bound \cite[Theorem 9.18]{Hirschfeld2008}, the number of $\mathbb{F}_q$-rational point of $\Gamma_c$ is at least $q+1-2\sqrt{q}$. Not counting the three points at infinity, $(0,0)$, $(1,0)$ and $(0,1)$, we obtain at least
\[q-5-2\sqrt{q}>0\]
solutions for $(x,y)$ if $q\geq 16$. 
\end{proof}

\begin{lemma} \label{lm:ellipsecomplete}
Let $q\geq 16$ be a power of two, and $c$ an element of $\mathbb{F}_q$, with $c\neq 0, \gamma^3$. Define the homogeneous polynomial
\[F_c(X,Y,Z) = (X^2+\gamma XZ+Z^2)(Y^2 + \gamma YZ+Z^2) + c (X+Y)Z^3.\]
\begin{enumerate}[(i)]
\item The projective plane algebraic curve $\Gamma_c:F_c=0$ has two ordinary singularities at $P_1(1:0:0)$ and $P_2(0:1:0)$. All other points (over the algebraic closure of $\mathbb{F}_q$) are nonsingular.
\item $\Gamma_c$ is absolutely irreducible. 
\item The genus of $\Gamma_c$ is $1$.
\item There are elements $x,y\in \mathbb{F}_q$, $x\neq y$, such that $F_c(x,y,1)=0$. 
\end{enumerate}
\end{lemma}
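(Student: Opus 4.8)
The plan is to follow the template of Lemma \ref{lm:hyperbolacomplete}: first pin down the singularities of $\Gamma_c$, then deduce absolute irreducibility and the genus from this local data, and finally invoke the Hasse--Weil bound to force an $\mathbb{F}_q$-rational point with $x\neq y$. Throughout I work in characteristic $2$ and abbreviate $A=X^2+\gamma XZ+Z^2$ and $B=Y^2+\gamma YZ+Z^2$, so that $F_c=AB+c(X+Y)Z^3$.

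For (i) I would compute the partial derivatives. Since $2=0$, one gets $\partial_X F_c=Z(\gamma B+cZ^2)$, $\partial_Y F_c=Z(\gamma A+cZ^2)$, and $\partial_Z F_c=\gamma XB+\gamma YA+c(X+Y)Z^2$. On the line at infinity $F_c(X,Y,0)=X^2Y^2$, so $\Gamma_c$ meets $Z=0$ only at $P_1$ and $P_2$; a direct check shows both lie on $\Gamma_c$ with all three partials vanishing, and expanding $F_c(1,Y,Z)$ about $P_1$ exhibits the tangent cone $Y^2+\gamma YZ+Z^2$, whose roots $\delta,1/\delta$ are distinct, so $P_1$ (and by symmetry $P_2$) is an ordinary node. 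For an affine singular point ($Z=1$) the first two equations force $A=B=c/\gamma$; since $A+B=(X+Y)(X+Y+\gamma)$ in characteristic $2$, the relation $A=B$ gives $X+Y\in\{0,\gamma\}$, while $F_c=0$ forces $X+Y=c/\gamma^2$. These are incompatible unless $c=0$ or $c=\gamma^3$, both excluded by hypothesis, so there are no further singular points.

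For (ii) I rule out a proper factorization over the algebraic closure. Any linear component meets $Z=0$ in a point of $\Gamma_c$, hence passes through $P_1$ or $P_2$ (the line $Z=0$ itself being excluded, as $Z\nmid F_c$); substituting $Y=\beta Z$ or $X=\beta Z$ into $F_c$ and using $c\neq 0$ shows no such line divides $F_c$. The only remaining possibility is a product of two conics, and after excluding linear factors both conics must be smooth; by B\'ezout they meet in four points, each of which is a singular point of $\Gamma_c$, contradicting the fact that $\Gamma_c$ has exactly two nodes. This establishes absolute irreducibility, and then (iii) follows from the genus formula for a plane quartic with only ordinary singularities, $g=\binom{3}{2}-2\binom{2}{2}=3-2=1$.

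For (iv) I apply the Hasse--Weil bound \cite[Theorem 9.18]{Hirschfeld2008} to the nonsingular (genus $1$) model of $\Gamma_c$, giving at least $q+1-2\sqrt{q}$ rational points. As the affine points of $\Gamma_c$ are all nonsingular, they correspond bijectively to rational points of the model over the affine plane; subtracting the at most four points lying over $P_1,P_2$ leaves at least $q-3-2\sqrt{q}>0$ affine rational points once $q\geq 16$. Finally $F_c(X,X,1)=(X^2+\gamma X+1)^2$ has no root in $\mathbb{F}_q$, since $X^2+\gamma X+1$ is irreducible, so every affine rational point automatically satisfies $x\neq y$, which yields the required pair. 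I expect the genuine obstacle to be the irreducibility step (ii): producing a clean and complete argument that the two-conic factorization cannot occur, together with the verification that no line component slips through.
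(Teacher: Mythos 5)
Your proof is correct and follows the same overall architecture as the paper's (singularity analysis, then irreducibility, then genus, then Hasse--Weil); your parts (i), (iii) and (iv) coincide with the paper's arguments essentially verbatim, including the final observation that $F_c(x,x,1)=(x^2+\gamma x+1)^2$ has no rational zero. The genuine difference is in (ii). The paper proceeds by a hands-on case analysis: any hypothetical quadratic factor is written explicitly in terms of the tangent lines at the nodes, in the form $Q_1=(X+\delta Z)(Y+\delta^{\pm1}Z)+a_1Z^2$, $Q_2=(X+\delta^{\mp1}Z)(Y+\delta^{\mp1}Z)+a_2Z^2$, and contradictions are extracted by evaluating at the points $(\delta:\delta:1)$, $(1/\delta:1/\delta:1)$ and using that $F_c$ is symmetric in $X,Y$ and defined over $\mathbb{F}_q$; this computation also makes visible why $c=\gamma^3$ must be excluded, since that choice of constants reproduces exactly $F_{\gamma^3}$. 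Your route is shorter and more conceptual: first exclude all linear components by direct substitution (using $c\neq 0$), which simultaneously disposes of the line-times-cubic case and of reducible quadratic factors --- cases the paper treats separately --- and then rule out a product of two smooth conics by B\'ezout. Two points in that B\'ezout step need tightening. First, B\'ezout counts intersections with multiplicity, so ``they meet in four points'' is not yet a contradiction with having only two available singular points: two smooth conics can meet in just two points, each with multiplicity $2$. You must use that the nodes are \emph{ordinary}: at $P_1$ (and $P_2$) each conic contributes one smooth branch, and the two tangents are distinct because the tangent cone of $\Gamma_c$ there has distinct linear factors, so the intersection multiplicity at each node is $1$, giving total multiplicity at most $2<4$. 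Second, B\'ezout presupposes the two conics share no component, so the degenerate case $F_c=\lambda Q_1^2$ must be excluded; this is immediate since in characteristic $2$ a square has identically vanishing partial derivatives, so every point of $\Gamma_c$ would be singular, contradicting (i) (alternatively, $F_c$ contains the monomial $cXZ^3$ with $c\neq 0$, so it is not a square). With these additions your (ii) is complete and arguably cleaner than the paper's case analysis.
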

\begin{proof}
(i) Over $\mathbb{F}_{q^2}$, we have
\begin{align*}
F_c(X,Y,Z) = (X+\delta Z)(X+1/\delta Z)(Y+\delta Z)(Y+1/\delta Z) + c (X+Y) Z^3,
\end{align*}
This shows that $P_1,P_2$ are indeed ordinary singularities with tangent lines
\begin{align} \label{eq:tangents}
m_1:X=\delta Z, \quad \ell_1:X=1/\delta Z, \quad m_2:Y=\delta Z, \quad \ell_2:Y=1/\delta Z.
\end{align}
$\Gamma_c$ has no further points on the line at infinity. Let $P(x:y:1)$ denote a singular point. Taking partial derivatives with respect to $X,Y$, we obtain
\[x^2+\gamma x+1=y^2+\gamma y+1 = c/\gamma.\]
On the one hand, $F_c(x,y,1)=0$ implies $x+y=c/\gamma^2$. On the other hand, $x^2+\gamma x+1=y^2+\gamma y+1$ implies 
\[(x+y)(x+y+\gamma)=0.\]
Hence,
\[0=\frac{c}{\gamma^2}\left(\frac{c}{\gamma^2}+\gamma\right)=\frac{c(c+\gamma^3)}{\gamma^2},\]
which contradicts to the choice of $c$. This proves (i).

(ii) Assume first that $\Gamma_c$ decomposes into the product of two (possibly reducible) quadratic curves: $F_c=Q_1Q_2$. Write $C_i:Q_i=0$, $i=1,2$. If $C_1$ does not pass through $P_1$, then $P_1$ is a singular point of $C_2$, hence $C_2$ is reducible. This implies $Q_2(X,Y,Z)=(Y+\delta Z)(Y+1/\delta Z)$, and $Q_1(X,Y,Z)=(X+\delta Z)(X+1/\delta Z)$ as $F_c$ is symmetric in $X,Y$. Hence, $\Gamma_c$ is the union of the four tangent lines, which is not possible due to the nonzero terms $\gamma^3(X+Y)$. This shows that $P_1,P_2$ are smooth points of $C_1$ and $C_2$. Moreover, the lines $m_1,m_2,\ell_1,\ell_2$ given in \eqref{eq:tangents} are tangents of $C_1, C_2$. We can assume w.l.o.g. that $X=\delta Z$ is the tangent of $C_1$ at $P_2$. At $P_1$, the tangent of $C_1$ is either (Case 1) $Y=1/\delta Z$ or (Case 2) $Y=\delta Z$. 

Case 1: We have
\begin{align*}
Q_1(X,Y,Z)&=(X+\delta Z)(Y+1/\delta Z) + a_1 Z^2,\\
Q_2(X,Y,Z)&=(X+1/\delta Z)(Y+\delta Z) + a_2 Z^2.
\end{align*}
Since $F_c$ is symmetric in $X,Y$, we must have $Q_1(X,Y,Z)=Q_2(Y,X,Z)$, and $a_1=a_2$. The points $D_1(\delta:\delta:1)$, $D_2(\frac{1}{\delta}:\frac{1}{\delta}:1)$ are on $\Gamma_c$. Plugging them into $Q_1,Q_2$, we get $a_1=a_2=0$, a contradiction. 

Case 2: We have
\begin{align*}
Q_1(X,Y,Z)&=(X+\delta Z)(Y+\delta Z) + a_1 Z^2,\\
Q_2(X,Y,Z)&=(X+1/\delta Z)(Y+1/\delta Z) + a_2 Z^2.
\end{align*}
As $F_c$ is defined over $\mathbb{F}_q$, $a_1^q=a_2$ holds. Moreover, $Q_1(\delta,\delta,1)=a_1\neq 0$, hence $0=Q_2(\delta,\delta,1)=\gamma^2+a_2$. This implies $a_1=a_2=\gamma^2$, which contradicts to $F_c=Q_1Q_2$. 

For (ii), it remains to show that $\Gamma_c$ cannot decompose into the product of a linear and an irreducible cubic factor. If this would be the case, since $F_c$ is symmetric in $X,Y$, the linear component had the shape $\ell:X+Y=aZ$. Then, $\Gamma_c$ had $(1:1:0)$ as a point at infinity, a contradiction. 

(iii) follows from the genus formula, see \cite[Theorem 5.57]{Hirschfeld2008}. (iv) By the Hasse-Weil Bound \cite[Theorem 9.18]{Hirschfeld2008}, the number of $\mathbb{F}_q$-rational places of $\Gamma_c$ is at least $q-2\sqrt{q}+1$. The two points at infinity correspond to 4 branches of order 1, and each affine point correspond to a unique branch of order 1, we have at least $q-3-2\sqrt{q}>0$ affine points of $\Gamma_c$ over $\mathbb{F}_q$. No such point can lay on the line $X=Y$. This finishes the proof.
\end{proof}

\begin{proposition} \label{pr:completeness}
Let $C$ be an ellipse or a hyperbola in $\mathbb{F}_q^2$, $q=2^m\geq 16$. Let $N$ be the nucleus of $C$ and $P$ a point of $AG(2,q)$ not contained in $C\cup \{N\}$. Then $C\cup\{P\}$ is not Sidon in $\mathbb{F}_q^2$.  
\end{proposition}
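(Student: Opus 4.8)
The plan is to recast completeness as a statement about sums of three points. Normalize coordinates so that $N$ is the origin and $C$ is one of the standard conics $H:XY=1$ or $E:X^2+\gamma XY+Y^2=1$, both with nucleus at the origin. Since $A=\mathbb{F}_q^2$ has exponent $2$ and $C$ is already Sidon by Proposition \ref{pr:conicsidon}, any failure of the Sidon property in $C\cup\{P\}$ must involve $P$. Examining how often $P$ can occur in a relation $x+y=z+w$ and using $P\notin C$, one sees that the doublings and the two-sided occurrences of $P$ collapse to trivial relations, so the only genuine violation has the shape $P+P_1=P_2+P_3$, i.e. $P=P_1+P_2+P_3$ with $P_1,P_2,P_3\in C$. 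Conversely, any such identity is a violation, and the three summands are then automatically distinct, since two of them coinciding would force $P\in C$. Hence it suffices to prove that every $P\in AG(2,q)\setminus(C\cup\{N\})$ is a sum of three points of $C$.

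Next I would cut the parameter space down using Lemma \ref{lm:groups}. The cyclic group $G$ there is linear, hence additive, it fixes $N$, preserves $C$, and by part (ii) acts transitively on each level set $\mathcal{D}_s$ with $s\neq 0$. Because additive automorphisms preserve both the Sidon property and $C$, whether $C\cup\{P\}$ is Sidon depends only on the $G$-orbit of $P$, so I may replace $P$ by any convenient representative of its level set. For the ellipse $\mathcal{D}_0=\{N\}$ by Lemma \ref{lm:groups}(iii), so $P\notin C\cup\{N\}$ forces $P$ into a single orbit $\mathcal{D}_s$ with $s\neq 0,1$. For the hyperbola the generic situation is the same, but here $\mathcal{D}_0$ also consists of the two punctured asymptotes (Lemma \ref{lm:groups}(iv)), giving an extra family of positions for $P$ that must be treated apart.

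For the generic level sets I would parametrize $C$, write $P=P_1+P_2+P_3$ componentwise against the normalized $P$, and eliminate; this leaves a single polynomial relation in two parameters, an affine plane cubic. A direct computation identifies this cubic, after an $\mathbb{F}_q$-linear change of coordinates, with the curve $\Gamma_c$ of Lemma \ref{lm:hyperbolacomplete} for the hyperbola and of Lemma \ref{lm:ellipsecomplete} for the ellipse, the parameter $c$ being a function of the level $s$; the excluded values $c\neq 0,1$, respectively $c\neq 0,\gamma^3$, then correspond exactly to excluding $P\in C$ (level $s=1$) and the degenerate level $s=0$. The lemma supplies an $\mathbb{F}_q$-rational point with the stated nondegeneracy ($x,y\neq 0$ for the hyperbola, $x\neq y$ for the ellipse, and off the finitely many excluded loci), which unwinds to three distinct points of $C$ summing to $P$, so $C\cup\{P\}$ is not Sidon. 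The remaining hyperbola case, $P$ on a punctured asymptote, is easier: there the same elimination degenerates in degree to an irreducible affine conic, which carries $q\pm 1$ rational points and so readily yields the required decomposition once $q\geq 16$.

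Granting the genuine algebro-geometric content (irreducibility, genus $1$, and the Hasse--Weil count), which is already packaged in Lemmas \ref{lm:hyperbolacomplete} and \ref{lm:ellipsecomplete} and which is where the hypothesis $q\geq 16$ is spent, the residual work is bookkeeping but delicate. The hard part will be carrying out the elimination and normalizing it precisely into the shape of $\Gamma_c$, maintaining the dictionary between the level $s$ and the parameter $c$ so that the lemma's excluded values line up exactly with $C\cup\{N\}$, and—most importantly—checking that the nondegeneracy conditions built into the lemmas (nonvanishing coordinates, off-diagonal position, and avoidance of the points subtracted in the count) are precisely what is needed to guarantee that the recovered $P_1,P_2,P_3$ are three honest, distinct points of $C$, so that the resulting relation is a true Sidon violation and not a trivial one.
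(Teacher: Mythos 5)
Your proposal is correct and follows essentially the same route as the paper's proof: reduce non-completeness to writing $P$ as a sum of three distinct points of $C$, use the $G$-orbits of Lemma \ref{lm:groups} to restrict attention to one representative per level set $\mathcal{D}_s$, and settle the generic levels via Lemmas \ref{lm:hyperbolacomplete} and \ref{lm:ellipsecomplete} with exactly the dictionary $c=s+1$ (hyperbola) and $c=\gamma^3/(s+1)$ (ellipse), treating the punctured asymptotes of the hyperbola separately. The only cosmetic difference is that the paper disposes of the asymptote orbits by the explicit substitution $y=x+1$ rather than by counting points on the degenerate (line-plus-conic) factorization of the cubic at $s=0$.
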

\begin{proof}
Let us choose the affine coordinate frame such that $C$ is either the hyperbola $H:XY=1$, or the ellipse $E:X^2+\gamma XY+Y^2=1$. In both cases, $C$ is given by $Q(X,Y)=1$, where $Q(X,Y)$ is a nonsingular quadratic form over $\mathbb{F}_q$. Let $G$ be the cyclic affine linear group of Lemma \ref{lm:groups}, with orbits $\mathcal{D}_s$, $s\in \mathbb{F}_q$, and $\mathcal{D}^{(1)}_0$, $\mathcal{D}^{(2)}_0$. We have
\[\mathcal{D}_0=\{(0,0)\} \cup \mathcal{D}^{(1)}_0 \cup \mathcal{D}^{(2)}_0, \]
where
\begin{align*}
\mathcal{D}^{(1)}_0 &= \{(0,y) \mid y\in \mathbb{F}_q\setminus \{0\}\} ,\\
\mathcal{D}^{(2)}_0 &= \{(x,0) \mid x\in \mathbb{F}_q\setminus \{0\}\}
\end{align*}
are $G$-orbits. 

Let $\mathcal{T}$ be the set of points that are the sum of three distinct elements of $C$. We show that $P\in \mathcal{T}$, this implies the proposition. Since $\mathcal{T}$ is $G$-invariant,
it is the union of $G$-orbits. Moreover, $C=\mathcal{D}_1$, $N=(0,0)$ implies that $P\in \mathcal{T}$ is equivalent with the fact that all $G$-orbits, different from $\{(0,0)\}$ or $\mathcal{D}_1$, have nonempty intersection with $\mathcal{T}$. 

We first consider the case $C=H$, $\mathcal{O}=\mathcal{D}_s$ with $s\not\in \{0,1\}$. The sum of the points $(x,\frac{1}{x})$, $(y,\frac{1}{y})$, $(1,1)$ of $H$ is in $\mathcal{O}$ if and only if
\[Q\left(x+y+1,\frac{1}{x}+\frac{1}{y}+1\right)=(x+y+1)\left(\frac{1}{x}+\frac{1}{y}+1\right)=s.\]
Equivalently,
\[x^2y+xy^2+(s+1) xy+x^2+y^2+x+y=0\]
holds with $x,y\in \mathbb{F}_q\setminus\{0\}$. The existence of such $x,y$ follows from Lemma \ref{lm:hyperbolacomplete} with $c=s+1$. 

The next case is $C=H$, $\mathcal{O}=\mathcal{D}_0^{(1)}$. Let $x\in\mathbb{F}_q\setminus\mathbb{F}_4$ be arbitrary, and let $y=x+1$. We have
\[\frac{1}{x}+\frac{1}{y}+1=\frac{x^2+x+1}{x^2+x}\neq 0,\]
which implies $(x,\frac{1}{x})+(y,\frac{1}{y})+(1,1) \in \mathcal{D}_0^{(1)}$. Similarly, $\mathcal{D}_0^{(2)}\cap \mathcal{T} \neq \emptyset$. The claim therefore holds for $C=H$. 

For the rest of the proof, we assume $C=E$, $s\in \mathbb{F}\setminus\{0,1\}$. We use the parametrization \eqref{eq:s1} of $E$. The sum of the points
\begin{align} \label{eq:3pts}
\left(\frac{\gamma}{Q(x,1)},\frac{x^2+1}{Q(x,1)}\right), \quad \left(\frac{\gamma}{Q(y,1)},\frac{y^2+1}{Q(y,1)}\right), \quad (0,1)
\end{align}
of $E$ is in $\mathcal{D}_s$ if and only if
\begin{align} \label{eq:s1}
Q\left(
	\frac{\gamma}{Q(x,1)}+\frac{\gamma}{Q(y,1)},
	\frac{x^2+1}{Q(x,1)} + \frac{y^2+1}{Q(y,1)}+1 
\right) = s
\end{align}
holds for distinct values $x,y \in \mathbb{F}_q$. We have
\begin{align*}
A&=\frac{\gamma}{Q(x,1)}+\frac{\gamma}{Q(y,1)} &=\quad& \frac{\gamma Q(x,1)+ \gamma Q(y,1)}{Q(x,1)Q(y,1)},\\
B&=\frac{x^2+1}{Q(x,1)} + \frac{y^2+1}{Q(y,1)}+1 &=\quad& \frac{(y^2+1) Q(x,1)+ \gamma x Q(y,1)}{Q(x,1)Q(y,1)}.
\end{align*}
Furthermore, 
\begin{multline*}
Q\big(\gamma Q(x,1)+ \gamma Q(y,1),(y^2+1) Q(x,1)+ \gamma x Q(y,1)\big) = \\
Q(x,1)Q(y,1)\;\big( Q(x,1)Q(y,1) +\gamma^3(x+y)\big),
\end{multline*}
thus we obtain
\[Q(A,B)=\frac{Q(x,1)Q(y,1) +\gamma^3(x+y)}{Q(x,1)Q(y,1)}.\]
Since $Q(x,1),Q(y,1)\neq 0$, $Q(A,B)=s$ is equivalent with 
\begin{align*}
(s+1)Q(x,1)Q(y,1) +\gamma^3(x+y)=0.
\end{align*}
By the definition of $Q(X,Y)$, this is precisely
\begin{align} \label{eq:normeq} 
(s+1)(x^2+\gamma x+1)(y^2+\gamma y +1) +\gamma^3(x+y)=0.
\end{align}
As $s\neq 0,1$, we may apply Lemma \ref{lm:ellipsecomplete}(iv) with $c=\gamma^3/(s+1)$. We conclude that there are $x,y\in \mathbb{F}_q$, $x\neq y$ such that \eqref{eq:normeq} holds. With these values of $x,y$, the sum of the points \eqref{eq:3pts} of $E$ are in $\mathcal{D}_s$. This finishes the proof. 
\end{proof}

\section{Proof of Theorem \ref{thm:conics}} \label{sec:proof}

\begin{proof}[Proof of Theorem \ref{thm:conics}]
$|C|$ is divisible by $3$ if and only if $m$ is even and $C$ is a hyperbola, or $m$ is odd and $C$ is an ellipse. Hence, Propositions \ref{pr:conicsidon} and \ref{pr:completeness} imply the theorem. 
\end{proof}

\begin{remark}
\begin{enumerate}[a)]
\item Let $C$ be an ellipse. If $m\leq 3$, then the size of $|C|$ (if $m=2$) or $|C\cup\{N\}|$ (if $m=1,3$) is $3,6$ or $9$. By Table \ref{tab:smalln}, these are Sidon sets of maximal size, which are of course complete. 
\item Let $C$ be a hyperbola. Straightforward calculation shows that neither $C$ (if $m=2$), nor $C\cup\{N\}$ (if $m=1$) are complete as Sidon sets. If $m=3$, then $C\cup\{N\}$ is complete.
\item If $C$ is a hyperbola and $m$ is odd, then the completeness of $C\cup \{N\}$ was shown in \cite[Section 6]{Carlet2022a} by Carlet, see also Remark \ref{rem:hyperbola}. 
\end{enumerate}
\end{remark}

\noindent {\bf \large Acknowledgements}. I thank Claude Carlet for many valuable remarks, references on APN functions, and results concerning the completeness of their graph. I thank L\'aszl\'o Babai, J\'ozsef Balogh and Lajos R\'onyai for useful information on Sidon sets in elementary abelian groups. I am also grateful to the organizers of the eighth international Olympiad in cryptogrphy NSUCRYPTO \cite{Gorodilova2022}, because this research has been motivated by Problem 11 ``Distance to affine functions'' of the Olympiad.

\printbibliography

\end{document}